\theoremstyle{plain}
\newtheorem{theorem}{Theorem}
\newtheorem{thm}{Theorem}[section]
\newtheorem{lem}[thm]{Lemma}       
\newtheorem{proposition}[thm]{Proposition}
\newtheorem{corollary}[thm]{Corollary}
\theoremstyle{definition}   
\newtheorem{definition}[thm]{Definition}
\newtheorem{rmk}[thm]{Remark}
\numberwithin{equation}{section}
\newcommand{\Cn}{{\mathbb C^n}}
\newcommand{\Bn}{{\mathbb B_n}}
\newcommand{\re}{{\text{Re}}\,}
\newcommand*\diff{\mathop{}\!\mathrm{d}}
\begin{document}

\title[Carleson embedding theorem]{Carleson embedding theorem for an exponential Bergman space on the unit ball}
\author[H. R. Cho, H.-W. Lee, S. Park]{Hong Rae Cho, Han-Wool Lee, and Soohyun Park*}
\address{Hong Rae Cho:
Department of Mathematics, Pusan
National University, Busan 46241, Korea}
\email{chohr@pusan.ac.kr}

\address{Han-Wool Lee:
Department of Mathematics, Pusan
National University, Busan 46241, Korea}
\email{lhw2725@gmail.com}

\address{Soohyun Park:
Department of Mathematics, Pusan
National University, Busan 46241, Korea}
\email{shpark7@pusan.ac.kr}


\thanks{The first author was supported by NRF of Korea (NRF-2020R1F1A1A01048601)}
\thanks{The second author was supported by NRF of Korea (NRF-2020R1I1A1A01074837)}
\thanks{The third author was supported by NRF of Korea (NRF-2021R1I1A1A01049889)}

\begin{abstract}
 We characterize the Carleson measures for an exponential Bergman space on the unit ball of $\Cn$ in terms of the ball induced by the complex Hessian of the logarithm of the weight function. The boundedness (or compactness) of integral operators, Ces\`{a}ro operators and Toeplitz operators, is given using the Carleson measure (or vanishing Carleson measure) characterization.
\end{abstract}

\maketitle

\section{Introduction}\label{sec1}

Let ${\mathbb C^n}$ denote the cartesian product of $n$ copies of the complex field ${\mathbb C}$ for positive integer $n$. For $z=(z_1, \cdots , z_n), w=(w_1, \cdots , w_n) \in {\mathbb C^n}$, the inner product is
$\langle z,w\rangle = \sum_{j=1}^n z_j \overline{w}_j$ 
and the associated norm is 
$\lvert z \rvert^2 = \langle z,z\rangle. $
The open unit ball of ${\mathbb C^n}$ is denoted by $\Bn = \left\{z \in {\mathbb C^n} ; \lvert z \rvert < 1\right\}$ and ${\mathbb D}:= {\mathbb B_1}$.
Let $\diff \mu$ be a Borel measure. For a Borel set $E \subset \Bn$,  
$\mu(E) := \int_{E} \diff \mu.$

The purpose of this article is to demonstrate the Carleson measure characterization for a weighted 
Bergman space with a particular weight $e^{-{\psi}}$ on $\Bn$ with 
$$\psi(z):=\frac{1}{1-\lvert z \rvert^2}.$$
Let $1\leq p < \infty$. Let $\mathcal{O}(\Bn)$ be the space of all holomorphic functions on $\Bn$, 
and let $L^p_{\psi}(\Bn) := L^p(\Bn, e^{-{\psi}} \diff v)$, where $\diff v$ denotes the ordinary Lebesgue measure on ${\mathbb C^n}$.
The exponential Bergman space $A^p_{\psi}(\Bn) := \mathcal{O}(\Bn) \cap L^p_{\psi}(\Bn)$ is the space of holomorphic functions whose $L^p$-norm with the measure $e^{-{\psi}}\diff v$ is bounded, namely, 
\begin{align*}
  \|f\|_{p, \psi}:=\left\{ \int_\Bn \lvert  f(z) \rvert^p e^{-\psi(z)} \diff v(z)  \right\}^{\frac{1}{p}}< \infty.
\end{align*}
The exponential Bergman space $A^p_{\psi}(\Bn)$ is a closed subspace of $L^p_{\psi}(\Bn)$ by Lemma \ref{SMVP}.

When $p=2$, the space 
$A^2_{\psi}(\Bn)$ is a Hilbert space with inner product
\begin{align*}
  \langle f, g\rangle_{\psi} := \int_{\Bn}f(z)\overline{g(z)} e^{-\psi(z)} \diff v(z)
\end{align*}
for $f, g \in A^2_{\psi}(\Bn)$.
Lemma \ref{SMVP} guarantees that each point evaluation $L_zf=f(z)$ is bounded on $A^2_\psi(\Bn)$.  
By the Riesz representation theorem, there is a holomorphic function $K_z \in A^2_{\psi}(\Bn)$ satisfying $f(z) = \langle f, K_z\rangle_{\psi}$. We call ${K_{\psi}(z,w)}:=\overline{K_z(w)}$ the Bergman kernel for $A^2_{\psi}(\Bn)$, i.e., 
\begin{align*}
  f(z) = \int_{\Bn}f(w)K_{\psi}(z,w) e^{-\psi(w)} \diff v(w).
\end{align*}

For a Borel measure $\diff \mu$, if there is a constant $C>0$ satisfying
  \begin{align}\label{def_Carl}
    \int_{\Bn} \lvert  f(z) \rvert^p e^{-\psi(z)} \diff\mu(z) \le C \int_{\Bn} \lvert  f(z) \rvert^p e^{-\psi(z)} \diff v(z),
  \end{align}
then we call the measure $\diff \mu$ is a Carleson measure for $A^p_{\psi}(\Bn)$.
It means that 
the inclusion operator $i_p : A^p_{\psi}(\Bn) \rightarrow L^p(\Bn, e^{- \psi} \diff \mu ) $ is bounded, i.e., $i_p$ is embedding. 

Carleson \cite{MR141789} gave the embedding theorem on the Hardy space for solving the Corona problem on the unit disk $\mathbb D$. 
Results on Carleson measure for the Bergman space on ${\mathbb D}$ was given by Hastings \cite{MR374886}. 
Carleson type measures for exponential type weighted Bergman spaces 
on the unit disk ${\mathbb D}$ was introduced in \cite{oleinik1978embedding},
and has since been studied in numerous exponential type weighted $L^p$ analytic function spaces (see \cite{MR1074530, MR2679024} for ${\mathbb D}$; \cite{MR3472830} for ${\mathbb C}$; 
\cite{MR2891634, MR3010276} for ${\mathbb C^n}$). 

We are going to focus on some results on the unit ball $\Bn$. 
The Carleson measure theorem for the standard weighted and unweighted Bergman spaces on $\Bn$ was proved due to Cima and Wogen \cite{MR650200}. 
Luecking \cite{MR687635} suggested a new method 
which gives simple proofs of the results.
Pau and Zhao \cite{MR3426615} gave 
equivalent conditions of $(p, q)$-Carleson measures for standard weighted Bergman spaces. 
Besides weighted Bergman spaces with the Lebesgue measure, 
Schuster and Varolin considered the M\"{o}bius invariant measure with generalized weights including $(1-\lvert z \rvert^2)^{n+c}$ for some $c>0$ (see Theorem 5.8 of \cite{MR3213550}). 
Now, we consider the exponential Bergman space $A^p_{\psi}(\Bn)$ with $\psi(z)=\frac{1}{1-\lvert z \rvert^2}$, 
which had not been dealt with in any results we mentioned. The weight is rapidly decreasing compared to others.

We introduce the Carleson type embedding theorem for the exponential Bergman space $A^p_{\psi}(\Bn)$. 
First of all, we define the function ${\widehat \mu}_p$ for $p \ge 1$ as 
\begin{align*}
  {\widehat \mu}_p (z) := \frac{1}{\|\Phi_{p,z}\|_{p,\psi}^p} \int_{\Bn} \lvert \Phi_{p,z}(w)\rvert^p e^{-\psi(w)}\diff \mu(w) 
\end{align*}
where $\Phi_{p,z} (w) := e^{\frac{2}{p} \frac{1}{1-\langle w,z\rangle}-\frac{1}{p}\frac{1}{1-\lvert z \rvert^2}}$ is the test function in Lemma \ref{test fn lem}.

\begin{theorem}\label{Mthm} 
  Let $\diff\mu$ be a positive Borel measure. The following statements are equivalent:
  \begin{enumerate}
    \item[(a)] The measure $\diff\mu$ is a Carleson measure for $A^p_{\psi}(\Bn)$;
    \item[(b)] ${\widehat \mu}_p$ is a bounded function on $\Bn$;
    \item[(c)] For $z \in \Bn$ and sufficiently small $r>0$, there is a constant $C>0$ satisfying
    \begin{align*}
      \mu(B_{H}(z,r)) \le C v(B_{H}(z,r)),
    \end{align*}
    where $B_{H}(z,r)$ is the $\psi$-Hessian ball centered at $z$ with radius $r$ (it is defined in Section 2.1).
  \end{enumerate}
\end{theorem}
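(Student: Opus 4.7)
The plan is to prove the cyclic chain $(a)\Rightarrow(b)\Rightarrow(c)\Rightarrow(a)$ using three ingredients: the explicit test family $\Phi_{p,z}$ from Lemma \ref{test fn lem}, the submean value property of Lemma \ref{SMVP}, and the geometry of $\psi$-Hessian balls developed in Section 2.1 (in particular, the fact that small Hessian balls admit a countable covering of $\Bn$ of bounded multiplicity). The implication $(a)\Rightarrow(b)$ is immediate: substitute $f=\Phi_{p,z}$ into \eqref{def_Carl} and divide both sides by $\|\Phi_{p,z}\|_{p,\psi}^p$. For $(b)\Rightarrow(c)$ I would exploit the localization of the weighted test function on $B_H(z,r)$; for sufficiently small $r$ one expects the two-sided estimate
\[
|\Phi_{p,z}(w)|^p e^{-\psi(w)} \asymp |\Phi_{p,z}(z)|^p e^{-\psi(z)} \qquad (w\in B_H(z,r)),
\]
together with the companion bound $\|\Phi_{p,z}\|_{p,\psi}^p \lesssim v(B_H(z,r))\,|\Phi_{p,z}(z)|^p e^{-\psi(z)}$ from Lemma \ref{test fn lem}. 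Restricting the integral in the definition of $\hat\mu_p(z)$ to $B_H(z,r)$ and using these comparabilities then produces $\mu(B_H(z,r)) \lesssim \|\hat\mu_p\|_\infty\, v(B_H(z,r))$.

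The main direction $(c)\Rightarrow(a)$ is a covering argument driven by the correct submean bound. Fix $r>0$ small enough that Lemma \ref{SMVP} delivers, uniformly in $z\in\Bn$,
\[
|f(z)|^p e^{-\psi(z)} \le \frac{C}{v(B_H(z,r))}\int_{B_H(z,2r)} |f(w)|^p e^{-\psi(w)}\,\diff v(w),
\]
and choose a countable cover $\{B_H(z_k,r)\}_k$ of $\Bn$ with finite overlap. Then
\[
\int_{\Bn}|f|^p e^{-\psi}\,\diff\mu \;\le\; \sum_k \int_{B_H(z_k,r)} |f|^p e^{-\psi}\,\diff\mu,
\]
and on each $B_H(z_k,r)$ the pointwise submean estimate, combined with hypothesis (c) at the reference point $z_k$, converts $\diff\mu$ into $\diff v$ on the dilated ball $B_H(z_k,2r)$. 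Summing over $k$ and invoking bounded overlap recovers $C\|f\|_{p,\psi}^p$, which is precisely (a).

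The main obstacle lies in $(c)\Rightarrow(a)$. Two quantitative points must be pinned down: first, that the submean value property of Lemma \ref{SMVP} is uniform in $z$ and genuinely adapted to $\psi$-Hessian balls (rather than Euclidean ones); second, that both $v(B_H(z,r))$ and $\mu(B_H(z,r))$ vary by bounded ratios as $z$ moves within a single Hessian ball, so that the pointwise submean estimate can be aggregated into a clean ball-average on the right-hand side. Both should follow from the doubling and self-similarity properties of the Hessian geometry established in Section 2.1, but making them quantitative at the operational radius $r$ is where the careful bookkeeping will reside.
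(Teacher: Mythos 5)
Your proposal is correct and follows essentially the same route as the paper: $(a)\Rightarrow(b)$ by testing on $\Phi_{p,z}$, $(b)\Rightarrow(c)$ via the localization $\lvert\Phi_{p,z}(w)\rvert^p e^{-\psi(w)}\simeq 1$ on the Hessian ball together with $\|\Phi_{p,z}\|_{p,\psi}^p\simeq(1-\lvert z\rvert^2)^{2n+1}\simeq v(B_H(z,r))$, and $(c)\Rightarrow(a)$ by the finite-overlap covering of Lemma \ref{covering lem 2} combined with the submean value property of Lemma \ref{SMVP} (the paper merely routes this through the equivalent lattice condition (d)). The two quantitative points you flag are already supplied in Section 2.1: Lemma \ref{SMVP} is stated uniformly for $\psi$-Hessian balls, and the comparability of volumes as the center moves within a ball follows from Corollary \ref{cor_vol} with \eqref{comp in B_H}.
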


\noindent For details and the proof of Theorem \ref{Mthm}, see Theorem \ref{Carl measure char}. We also provide the theorem for vanishing Carleson measure in Section \ref{sec3}.

\begin{rmk}
(1) The statement (c) does NOT depend on the number $p$. It shows that if $d\mu$ is a Carleson measure for $A^p_{\psi}(\Bn)$ for some $p$, 
then it also holds for every $p \ge 1$. The fact is analogous to the result on the standard Bergman space.

(2) The implication $(b) \Rightarrow (a)$ means that if inequality \eqref{def_Carl} holds for all test functions, then it also holds for every function in $A^p_{\psi}(\Bn)$.
\end{rmk}

\begin{rmk}
The $\psi$-Hessian ball (cf. \cite{MR2891634, MR3010276, MR3213550, MR1656004}), which is induced by the metric $\left(\frac{\partial^2{\psi}}{\partial z_j\partial\overline{z}_k}\right)_{n \times n}$, 
plays an important role for the proof of Theorem \ref{Mthm}.

(1) With the help of $\psi$-Hessian balls, not only covering lemma (Lemma \ref{covering lem 2}), but also estimates for the test function $\Phi_{p,z}$ (as in the estimate \eqref{test fn_1} 
in Lemma \ref{test fn lem}) 
are obtained.

(2) Lemma \ref{SMVP} is crucial for the proof of Theorem \ref{Mthm}.
It is a weighted sub-mean-value property on the $\psi$-Hessian ball $B_{H}(z,r)$.
\end{rmk}


We should note that using the $\psi$-Hessian ball is suitable for investigating exponential type weighted Bergman spaces on the unit ball rather than using the ball with radius function $\left(\Delta \psi\right)^{-\frac{1}{2}}$. Actually, the ball with $\left(\Delta \psi\right)^{-\frac{1}{2}}$ is helpful tool for studying function spaces with exponential weight on ${\mathbb D}$ \cite{MR2538941, MR2679024} and ${\mathbb C^n}$ \cite{MR3406539}.
But it is not proper in the case of the unit ball.
For example, 
Lemma \ref{SMVP} with the reproducing property and comparable property implies the following estimate for the Bergman kernel on diagonal:
\begin{align}\label{Hessian}
K_{\psi}(z,z) \leq C \frac{e^{2\psi(z)}}{(1-\lvert z \rvert^2)^{2n+1}}.
\end{align}
The estimate is same as the result which can be obtained from Theorem 3.3 in \cite{MR3632468} using series expansion. 
However, one could get only 
\begin{align}\label{Laplacian}
K_{\psi}(z,z) \leq C \frac{e^{2\psi(z)}}{(1-\lvert z \rvert^2)^{3n}}
\end{align} 
if one use the ball induced by a radius function with $\left(\Delta \psi\right)^{-\frac{1}{2}}$ instead of the $\psi$-Hessian  ball $B_{H}(z,r)$.
The estimate (\ref{Hessian}) is sharper than (\ref{Laplacian}) when $n>1$.  

We study integral operators on the exponential Bergman space using the Carleson embedding theorem. 
For Ces\`{a}ro operators on $A^p_{\psi}(\Bn)$ with holomorphic symbols, and Toeplitz operators on $A^2_{\psi}(\Bn)$ with symbols in $L^2_{\psi}(\Bn)$, 
equivalent conditions of boundedness and compactness are presented in Section \ref{sec4}. 
Note that we get the results on Toeplitz operators only in the case of $p=2$. 
We need more properties about the Bergman kernel for the result of $p\neq 2$. 
We have not yet acquired appropriate estimates of off-diagonal of the Bergman kernel which can give boundedness of the Bergman projection of $A^2_{\psi}(\Bn)$ (cf. \cite{MR3899961, MR2891634}).

For studying the Toeplitz operator, the test functions have essential roles.  
The Toeplitz operator with a symbol function $u \in L^2_\psi(\Bn)$, is defined 
\begin{align*}
  T_{u} f(z) = \int_{\Bn} K_{\psi}(z,w)f(w)u(w)e^{-\psi(w)}\diff v(w)
\end{align*}
for $f \in A^2_\psi(\Bn)$.
Let $\diff \mu= u \diff v$. Then we have 
\begin{align*}
  {\widehat \mu_{2}} (z) =  \frac{1}{\|\Phi_{2,z}\|_{2,\psi}^2} \langle T_{u}\Phi_{2,z}, \Phi_{2,z} \rangle_{\psi},
\end{align*}
and its boundedness is equivalent to boundedness of the Toeplitz operator (see \eqref{imm_berezin} and Theorem \ref{bdd_Toeplitz}). The function ${\widehat \mu_{2}}$ behaves like the Berezin transform, which is defined with the Bergman kernel. Precisely, the test function $\Phi_{2,z}$ is used instead of the Bergman kernel function in typical methods. 

\bigskip

Throughout this paper, $C$ will be a symbol of a positive constant. The value of the constant can be changed often. The expression $A \lesssim B$ indicates $A\leq C B$, and $A \simeq B$ means that $A \lesssim B$ and $B \lesssim A$.

\section{Preliminaries}\label{sec2}
\subsection{The ball induced by $H_{\psi}$}\label{Sec. 2.1}
Let 
$$
\psi(z)=\frac{1}{1-\lvert z \rvert^2},
$$
then $\psi$ is strictly plurisubharmonic. 
The \emph{complex Hessian} of $\psi$ is defined by 
$$
H_{\psi}:=\left(\frac{\partial^2{\psi}}{\partial z_j\partial\overline{z}_k}\right)_{n \times n}.
$$ 

\begin{lem}\label{lem for H} 
  For $\psi(z)=\frac{1}{1-\lvert z \rvert^2}$, the complex Hessian $H_{\psi}$ has the following properties:
 \begin{enumerate}
    \item [(a)] $ H_{\psi}(z) = \frac{1}{(1-\lvert z \rvert^2)^3}\left((1-\lvert z \rvert^2)I_{n\times n} + 2A(z)\right)$,
    where $A(z)=\left(\overline{z}_j z_k\right)_{jk}$;
    \item [(b)] $ H_{\psi}(z)^{-1} = {(1-\lvert z \rvert^2)^2}\left(I_{n\times n} - \frac{2}{1+\lvert z \rvert^2}A(z)\right)$;
    \item [(c)] $\det H_{\psi}(z) = \frac{1+\lvert z \rvert^2}{\left(1-\lvert z \rvert^2\right)^{2n+1}}$;
    \item [(d)] $H_{\psi}(z)= \frac{1+\lvert z \rvert^2}{(1-\lvert z \rvert^2)^3} P_z + \frac{1}{(1-\lvert z \rvert^2)^2}Q_z,$
    where $P_z \zeta = \frac{\langle \zeta, z\rangle}{\langle z, z\rangle} z$ for $z\in\Bn-\{0\}$, $P_0 = 0$, and $Q_z = I-P_z$.
\end{enumerate}
\end{lem}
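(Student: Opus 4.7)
The plan is to prove (a) by a direct Wirtinger computation and then to deduce (b), (c), and (d) from (a) as purely algebraic corollaries of the rank-one structure of the matrix $A(z) = (\bar z_j z_k)_{jk}$. The critical fact is that $A(z)$ is Hermitian of rank one with trace $|z|^2$, so $A(z)^2 = |z|^2 A(z)$, and $A(z)$ coincides with $|z|^2 P_z$, the orthogonal projector onto the complex line through $z$ rescaled by $|z|^2$.

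For (a), I first compute $\partial\psi/\partial z_j = \bar z_j(1-|z|^2)^{-2}$ and then differentiate in $\bar z_k$ using the product and chain rules, which yields
\begin{align*}
\frac{\partial^2\psi}{\partial z_j\partial\bar z_k} = \frac{\delta_{jk}(1-|z|^2)+2\bar z_j z_k}{(1-|z|^2)^3}.
\end{align*}
Packaging these entries into a matrix gives (a) directly. For (b), I use the ansatz $H_\psi^{-1} = (1-|z|^2)^2[I-\beta A(z)]$; multiplying by $H_\psi$ and using $A(z)^2=|z|^2 A(z)$ reduces the identity $H_\psi H_\psi^{-1}=I$ to a single scalar equation, which gives $\beta = 2/(1+|z|^2)$ (equivalently, this is the Sherman--Morrison formula applied to a rank-one perturbation of a scalar multiple of the identity). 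For (c), I factor $(1-|z|^2)^{-3}$ out of each row of $H_\psi$ to obtain $\det H_\psi = (1-|z|^2)^{-3n}\det[(1-|z|^2)I+2A(z)]$; since $2A(z)$ has eigenvalues $2|z|^2,0,\ldots,0$, the matrix $(1-|z|^2)I+2A(z)$ has eigenvalues $1+|z|^2$ (simple) and $1-|z|^2$ (multiplicity $n-1$), so its determinant is $(1+|z|^2)(1-|z|^2)^{n-1}$, which yields the formula in (c).

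Finally, for (d), splitting $I=P_z+Q_z$ in (a) and substituting $A(z)=|z|^2 P_z$ gives
\begin{align*}
H_\psi = \frac{(1-|z|^2)(P_z+Q_z)+2|z|^2 P_z}{(1-|z|^2)^3} = \frac{1+|z|^2}{(1-|z|^2)^3}P_z + \frac{1}{(1-|z|^2)^2}Q_z.
\end{align*}
I do not foresee a serious obstacle: once (a) is established and the rank-one identity $A(z)^2=|z|^2 A(z)$ is noted, each of (b), (c), and (d) reduces to a short calculation. The only point requiring a bit of care is the identification $A(z) = |z|^2 P_z$, which is simply the standard description of the Hermitian rank-one operator with range $\operatorname{span}(z)$ in terms of the standard inner product.
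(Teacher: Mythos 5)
Your proposal is correct and takes essentially the same route as the paper: a direct Wirtinger computation for (a), the rank-one algebra $A(z)^2=|z|^2A(z)$ (which, as you note, is just the Sherman--Morrison formula for a rank-one perturbation of a scalar matrix) for (b), and the identities $A(z)=|z|^2P_z$, $P_z+Q_z=I$ for (d), exactly as in the paper's proof. The only cosmetic difference is in (c), where you read the determinant off the eigenvalues $1+|z|^2$ (simple) and $1-|z|^2$ (multiplicity $n-1$) of $(1-|z|^2)I+2A(z)$, whereas the paper invokes the matrix determinant lemma $\det\bigl(B+uv^T\bigr)=\bigl(1+v^TB^{-1}u\bigr)\det B$; these are the same computation in different packaging.
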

\begin{proof}
Statements (a) and (d) are given by simple calculations.
Besides, the facts $A(z)=\lvert z \rvert^2 P_z$ and $P_z + Q_z = I$ are used for the case of (d).
  
  Let $u=(\overline{z}_1, \cdots, \overline{z}_n)$, $v= (z_1, \cdots ,z_n)$ be column vectors, then $u v^T=\left(\overline{z}_j z_k\right)_{jk}=A(z)$ and $v^T u = \lvert z \rvert^2$.
  From (a), it is obtained
  \begin{align}\label{Hessian matrix reform}
    H_{\psi}(z) = \frac{2}{(1-\lvert z \rvert^2)^3}\left(\frac{(1-\lvert z \rvert^2)}{2}I_{n\times n} + u v^T \right) = \frac{2}{(1-\lvert z \rvert^2)^3}\left(B(z) + u v^T \right)
  \end{align}
  where $B(z)=\frac{(1-\lvert z \rvert^2)}{2}I_{n\times n}$.
  The Sherman-Morrison formula \cite{MR35118} gives
  \begin{align*}
    \left(B(z) + u v^T \right)^{-1} 
    &= B(z)^{-1}-\frac{B(z)^{-1} uv^{T} B(z)^{-1}}{1+v^{T} B(z)^{-1} u}\\
    &= \frac{2}{1-\lvert z \rvert^2} I_{n\times n} - \frac{4}{(1+\lvert z \rvert^2)(1-\lvert z \rvert^2)}A(z)
  \end{align*}
  which shows (b).

  The matrix determinant lemma \cite{harville1998matrix} gives 
  \begin{align*}
    \det \left(B(z) + u v^T \right) 
    &= \left( 1 + v^{T} B(z)^{-1} u\right) \det B(z) \\
    &= \frac{(1 + \lvert z \rvert^2)(1-\lvert z \rvert^2)^{n-1}}{2^n}
  \end{align*}
  which provides (c) from \eqref{Hessian matrix reform}.
\end{proof}

For a piecewise $C^1$ curve $\gamma : [0,1] \rightarrow \Bn$, the length induced by Hessian metric is defined by
\begin{align*}
  \ell_{\psi}(\gamma):=  \int_{0}^{1} \sqrt{\langle H_{\psi}(\gamma(t))\gamma'(t),\gamma'(t)\rangle} \diff t.
\end{align*} 
For $z, w \in \Bn$, the distance induced by Hessian metric is
\begin{align*}
  \sigma(z,w):= \inf_\gamma \ell_{\psi}(\gamma) 
\end{align*} 
  where $\gamma$ is a parametrized curve from $z$ to $w$ in $\Bn$.

The $\psi$-Hessian ball centered at $z$ with radius $r>0$ is defined by the associated ball with $\sigma$, namely, 
\begin{align*}
  B_{H}(z, r) := \left\{w \, ; \, \sigma(z,w) <r\right\}.
\end{align*}

Since $P_z$ is the orthogonal projection of ${\mathbb C^n}$ onto the subspace $[z]$ generated by $z$, and $Q_z = I-P_z$ is the projection onto the orthogonal complement of $[z]$, the statement (d) in Lemma \ref{lem for H} means that for $n \geq 2$ and $z \neq 0$, the matrix $ H_{\psi}(z)$ has two eigenvalues, namely, $\frac{1+\lvert z \rvert^2}{(1-\lvert z \rvert^2)^3}$ with eigenspace $[z]$, and $\frac{1}{(1-\lvert z \rvert^2)^2}$ with eigenspace ${\mathbb C^n} \ominus [z]$. That brings the definition of another region:
\begin{align*}
  D_{\psi}(z,r) := \left\{w \, ; \, \lvert z-P_z w\rvert < r\left(1-\lvert z \rvert^2\right)^{\frac32}, \, \lvert Q_z w\rvert < r\left(1-\lvert z \rvert^2\right) \right\}.
\end{align*}

We also denote $B(z, r)$ for the Euclidean ball centered at $z$ with radius $r>0$. The following lemma is well known.
\begin{lem}[\cite{MR2679024}] \label{L1->radius}
  Let ${f} : \Bn \rightarrow {\mathbb R}$ be a positive function.
  If Lipschitz norm of the function ${f}$ is bounded, i.e.,
  \begin{align*}
    \left\|{f}\right\|_L :=\sup_{z,w\in \Bn, z \neq w} \frac{\lvert {f}(z)-{f}(w)\rvert }{\lvert z-w\rvert} <\infty, 
  \end{align*}
  then there is a constant $C>0$ satisfying 
  \begin{align*}
    \frac{1}{2} {f}(z) \leq {f}(w) \leq 2{f}(z) \quad \text{for} \quad w \in B(z,\kappa {f}(z))
  \end{align*}
  when $0 < \kappa \leq \rho_{f}$ and $\rho_{f} = \frac{1}{2}\min \left\{1, \frac{1}{\left\|{f}\right\|_L}\right\}.$
  \end{lem}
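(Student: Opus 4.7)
The plan is to exploit the Lipschitz control on $f$ directly: for $w\in B(z,\kappa f(z))$, the triangle inequality for a Lipschitz function gives
\begin{align*}
|f(w)-f(z)|\le \|f\|_L\,|w-z|\le \kappa\,\|f\|_L\,f(z).
\end{align*}
So the single quantitative estimate needed is that the right-hand side is at most $\tfrac12 f(z)$, from which $\tfrac12 f(z)\le f(w)\le \tfrac32 f(z)\le 2f(z)$ follows immediately.

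To secure that estimate, I would plug in $\kappa\le \rho_f=\tfrac12\min\{1,1/\|f\|_L\}$. This choice gives $\kappa\|f\|_L\le \tfrac12$, regardless of whether $\|f\|_L\ge 1$ (in which case the $1/\|f\|_L$ factor is active) or $\|f\|_L<1$ (in which case the constant $1$ is active and we still have $\kappa\|f\|_L\le \tfrac12\|f\|_L<\tfrac12$). The role of the $\min$ is therefore simply to keep $\rho_f$ bounded when $\|f\|_L$ is very small; it is not doing geometric work.

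There is no real obstacle here; the only point to mention in the write-up is that the inequality should be read on $B(z,\kappa f(z))\cap\Bn$, since $f$ is only defined on $\Bn$, and that positivity of $f$ is used to make sense of a ball of radius $\kappa f(z)$ and to identify $f(z)$ with $|f(z)|$ on the right-hand side. A one-line presentation of the chain of inequalities above, together with the two-case check for $\rho_f$, completes the proof.
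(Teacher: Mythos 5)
Your argument is correct: the Lipschitz bound $|f(w)-f(z)|\le \kappa\|f\|_L f(z)\le \tfrac12 f(z)$ for $w\in B(z,\kappa f(z))\cap\Bn$, together with the two-case check that $\kappa\|f\|_L\le\tfrac12$ under $\kappa\le\rho_f$, immediately yields $\tfrac12 f(z)\le f(w)\le\tfrac32 f(z)\le 2f(z)$. The paper itself gives no proof (it cites the result as well known from the reference), and your proof is exactly the standard argument behind it, so there is nothing further to add.
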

\begin{corollary}
    Let $f(z)=1-\lvert z \rvert^2$. Then $\left\|{f}\right\|_L=2$.
\end{corollary}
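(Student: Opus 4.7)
The plan is to verify the two-sided bound $\|f\|_L = 2$ by a direct computation together with an explicit supremizing sequence. The quantity to analyze is
\begin{align*}
\frac{|f(z)-f(w)|}{|z-w|} \;=\; \frac{\bigl||z|^2 - |w|^2\bigr|}{|z-w|}.
\end{align*}

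First I would establish the upper bound $\|f\|_L \le 2$. Factor $\bigl||z|^2 - |w|^2\bigr| = \bigl||z|-|w|\bigr|\cdot\bigl(|z|+|w|\bigr)$, apply the reverse triangle inequality $\bigl||z|-|w|\bigr| \le |z-w|$, and use $|z|,|w|<1$ for $z,w\in\Bn$ to get $|z|+|w|<2$. This yields $|f(z)-f(w)| < 2|z-w|$ for every $z\neq w$ in $\Bn$, hence $\|f\|_L\le 2$.

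Next I would show the matching lower bound $\|f\|_L \ge 2$ by exhibiting a sequence of pairs $(z_k,w_k)$ along which the ratio tends to $2$. The natural choice is to place both points on the same real radius near the boundary: take $z_k = (1-1/k)e_1$ and $w_k = (1 - 2/k)e_1$ (where $e_1$ is the first standard basis vector). Then $|z_k - w_k| = 1/k$ and
\begin{align*}
|f(z_k) - f(w_k)| = \Bigl|(1-1/k)^2 - (1-2/k)^2\Bigr| = \frac{1}{k}\bigl(2 - 3/k\bigr),
\end{align*}
so the ratio equals $2 - 3/k \to 2$ as $k\to\infty$. Combined with the upper bound this forces $\|f\|_L = 2$.

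There is no real obstacle here; the only subtlety worth noting is that the supremum defining $\|f\|_L$ is not attained on $\Bn\times\Bn$ (it is approached only as both points tend to the boundary), so the argument must use a limiting sequence rather than a single extremal pair. With that in mind the computation is entirely elementary, which is consistent with the result being stated as an immediate corollary to Lemma~\ref{L1->radius}.
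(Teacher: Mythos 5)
Your argument is correct and is essentially the same as the paper's: the upper bound by factoring $\lvert \lvert z\rvert^2-\lvert w\rvert^2\rvert \le (\lvert z\rvert+\lvert w\rvert)\lvert z-w\rvert < 2\lvert z-w\rvert$, and the lower bound via the pair $(1-\varepsilon)e_1$, $(1-2\varepsilon)e_1$ giving the ratio $2-3\varepsilon \to 2$, which is exactly the sequence used in the paper with $\varepsilon = 1/k$. Nothing further is needed.
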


\begin{proof}
  Because $\lvert f(z)-f(w)\rvert = \lvert \lvert z \rvert^2-\lvert w \rvert^2 \rvert 
  \le (\lvert z \rvert+\lvert w \rvert)\lvert z-w\rvert,$
  we have
  \begin{align*}
    \left\|{f}\right\|_L=\sup_{z,w\in \Bn, z \neq w} \frac{\lvert f(z)-f(w)\rvert}{\lvert z-w\rvert}\le \sup_{z,w\in \Bn, z \neq w}(\lvert z \rvert+\lvert w \rvert)\le 2.
  \end{align*}

  For $\varepsilon>0$, let $z_{\varepsilon}=(1-\varepsilon,0,\cdots,0)$ and $w_{\varepsilon}=(1-2\varepsilon,0,\cdots,0)$. Then
  \begin{align*}
    \left\|{f}\right\|_L \ge \frac{\lvert {\lvert z_{\varepsilon}\rvert^2-\lvert w_{\varepsilon}\rvert^2} \rvert}{\lvert z_{\varepsilon}-w_{\varepsilon}\rvert}=2-3\varepsilon.
  \end{align*}
  Since $\varepsilon$ is arbitrary, $\left\|{f}\right\|_L \ge 2$.
  Hence $\left\|{f}\right\|_L = 2$. 
\end{proof}

\begin{lem}\label{comp in D}
  Let $0< r <\frac{C_0}{2}$ where $C_0 = \frac{1}{2} \min\left\{{1,\frac{1}{\|1-\lvert z \rvert^2\|_L}}\right\}=\frac{1}{4}
  $. Then
  $$1-\lvert z \rvert^2 \simeq 1-\lvert w \rvert^2$$ whenever $w \in D_{\psi}(z,r)$ for $z, w \in \Bn$.
\end{lem}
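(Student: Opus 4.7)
The plan is to reduce the statement to a direct application of Lemma \ref{L1->radius} with the radius function $f(z) = 1 - |z|^2$. By the preceding corollary, $\|f\|_L = 2$ and hence $\rho_f = \frac{1}{2}\min\{1, 1/2\} = \frac{1}{4} = C_0$, so that lemma will deliver $\frac{1}{2}(1-|z|^2) \le 1 - |w|^2 \le 2(1-|z|^2)$—exactly the required comparability—provided I can show $|w - z| < \kappa(1-|z|^2)$ for some $\kappa \le C_0$.

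The main step will therefore be a Euclidean-distance estimate that converts the two defining inequalities of $D_{\psi}(z,r)$ into a single bound on $|z - w|$. The case $z = 0$ is immediate, since $P_0 = 0$, $Q_0 = I$, and so $D_{\psi}(0,r) = \{w : |w| < r\}$. For $z \ne 0$, I will use that $P_z$ is the Hermitian orthogonal projection onto the complex line $[z]$ and $Q_z = I - P_z$ its orthogonal complement, with $P_z z = z$ and $Q_z z = 0$. These identities give $z - P_z w = P_z(z - w)$ and $-Q_z w = Q_z(z - w)$, so the Pythagorean identity for the orthogonal decomposition of $z-w$ yields
\begin{align*}
|z - w|^2 = |z - P_z w|^2 + |Q_z w|^2 < r^2(1-|z|^2)^3 + r^2(1-|z|^2)^2 \le 2r^2(1-|z|^2)^2,
\end{align*}
where the last step uses $1 - |z|^2 \le 1$. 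Hence $|z - w| < 2r(1-|z|^2)$, and since the hypothesis $r < C_0/2$ forces $\kappa := 2r < C_0 = \rho_f$, Lemma \ref{L1->radius} applies with this $\kappa$ and closes the argument.

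There is no real obstacle here; the only minor subtlety is the scale-splitting in the definition of $D_{\psi}(z,r)$, where the component along $z$ carries the smaller factor $(1-|z|^2)^{3/2}$ while the transverse component carries $(1-|z|^2)$. The Pythagorean identity absorbs both into a single Euclidean bound at the dominant scale $(1-|z|^2)$, which is precisely the scale compatible with Lemma \ref{L1->radius}, so the constant $C_0 = \frac{1}{4}$ and the threshold $r < C_0/2 = \frac{1}{8}$ match up with exactly enough room.
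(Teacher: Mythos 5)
Your proposal is correct and follows essentially the same route as the paper: both reduce membership in $D_{\psi}(z,r)$ to the Euclidean bound $\lvert z-w\rvert < 2r(1-\lvert z\rvert^2)$ (the paper via the triangle inequality $\lvert z-w\rvert \le \lvert z-P_z w\rvert + \lvert Q_z w\rvert$, you via the equivalent Pythagorean identity) and then invoke Lemma \ref{L1->radius} with $\kappa = 2r < C_0$. The orthogonality identities and the $z=0$ case you spell out are fine but not needed beyond what the paper's one-line estimate already gives.
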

\begin{proof}
  Let $w \in D_{\psi}(z,r)$. Then
  $\lvert z- w\rvert \le \lvert z-P_z w\rvert+\lvert Q_z w\rvert < 2r(1-\lvert z \rvert^2)$
  which means $w$ belongs $B\left(z, 2r(1-\lvert z \rvert^2)\right)$. It implies $1-\lvert z \rvert^2 \simeq 1-\lvert w \rvert^2$ provided $2r < C_0$ by Lemma \ref{L1->radius}.
\end{proof}

We can obtain a geometric description of the $\psi$-Hessian ball $B_{H}(z, r)$ with $D_{\psi}(z, r)$ 
(see Section 7 in \cite{MR3010276}). For the completeness, we give the proof in detail.
\begin{proposition}\label{ball_size_comp1}
Let $0< r \le \frac{C_0}{20}$. Then
  \begin{align*}
        D_{\psi}\left(z, \frac{r}{10}\right) \subset B_{H}(z, r)
  \end{align*}
  where $C_0 = \frac{1}{2} \min\left\{{1,\frac{1}{\|1-\lvert z \rvert^2\|_L}}\right\}.$
\end{proposition}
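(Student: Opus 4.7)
The plan is to construct an explicit piecewise $C^{1}$ curve from $z$ to $w$ whose Hessian length is strictly less than $r$. Motivated by the eigen-decomposition of $H_{\psi}$ in Lemma \ref{lem for H}(d), the idea is to separate the motion into a radial part along the complex line $[z]$ and a transverse part in $[z]^{\perp}$. Concretely, I take $\gamma_{1}(t)=(1-t)z+tP_{z}w$ and $\gamma_{2}(t)=P_{z}w+tQ_{z}w$ for $t\in[0,1]$, so that $\gamma_{1}$ runs from $z$ to $P_{z}w$ within $[z]$ and $\gamma_{2}$ runs from $P_{z}w$ to $w$ in the orthogonal direction. Convexity on $\gamma_{1}$ and the Pythagorean identity $\lvert\gamma_{2}(t)\rvert^{2}=\lvert P_{z}w\rvert^{2}+t^{2}\lvert Q_{z}w\rvert^{2}$ on $\gamma_{2}$ keep both segments inside $\Bn$.

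A preparatory observation is that $\gamma_{i}(t)\in D_{\psi}(z,r/10)$ for every $t\in[0,1]$: one checks $P_{z}\gamma_{1}(t)=\gamma_{1}(t)$, $Q_{z}\gamma_{1}(t)=0$, $P_{z}\gamma_{2}(t)=P_{z}w$, $Q_{z}\gamma_{2}(t)=tQ_{z}w$, so that the two defining inequalities of $D_{\psi}(z,r/10)$ follow directly from those satisfied by $w$. Since the hypothesis $r\le C_{0}/20$ guarantees $r/10<C_{0}/2$, Lemma \ref{comp in D} then supplies the key uniform comparability $1-\lvert\gamma(t)\rvert^{2}\simeq 1-\lvert z\rvert^{2}$ along the entire curve.

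On $\gamma_{1}$ the length estimate is immediate: since $\gamma_{1}(t)\in[z]$ and $\gamma_{1}'(t)=P_{z}w-z\in[z]$, the velocity lies in the eigenspace of the larger eigenvalue of $H_{\psi}(\gamma_{1}(t))$, so
\begin{align*}
\langle H_{\psi}(\gamma_{1}(t))(P_{z}w-z),(P_{z}w-z)\rangle=\frac{1+\lvert\gamma_{1}(t)\rvert^{2}}{(1-\lvert\gamma_{1}(t)\rvert^{2})^{3}}\lvert P_{z}w-z\rvert^{2}.
\end{align*}
Combining with $\lvert P_{z}w-z\rvert<(r/10)(1-\lvert z\rvert^{2})^{3/2}$ and the comparability, every factor of $1-\lvert z\rvert^{2}$ cancels and yields $\ell_{\psi}(\gamma_{1})\lesssim r/10$.

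The main work, and the main obstacle, is the length of $\gamma_{2}$. Here $P_{\gamma_{2}(t)}\ne P_{z}$, so the velocity $Q_{z}w$ is not an eigenvector of $H_{\psi}(\gamma_{2}(t))$; a naive eigenvalue bound would produce a divergent factor $(1-\lvert z\rvert^{2})^{-1/2}$. The remedy is to decompose $Q_{z}w$ along and transverse to $\gamma_{2}(t)$: the orthogonality $P_{z}w\perp Q_{z}w$ gives $\langle Q_{z}w,\gamma_{2}(t)\rangle=t\lvert Q_{z}w\rvert^{2}$, and a short direct computation exhibits the algebraic cancellation
\begin{align*}
\langle H_{\psi}(\gamma_{2}(t))Q_{z}w,Q_{z}w\rangle=\frac{\lvert Q_{z}w\rvert^{2}}{(1-\lvert\gamma_{2}(t)\rvert^{2})^{2}}+\frac{2t^{2}\lvert Q_{z}w\rvert^{4}}{(1-\lvert\gamma_{2}(t)\rvert^{2})^{3}},
\end{align*}
in which the apparent singularity $1/\lvert\gamma_{2}(t)\rvert^{2}$ drops out. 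Plugging in $\lvert Q_{z}w\rvert<(r/10)(1-\lvert z\rvert^{2})$ and the comparability, every residual power of $1-\lvert z\rvert^{2}$ cancels and one obtains $\ell_{\psi}(\gamma_{2})\lesssim r/10$. The factor $1/10$ in the hypothesis $D_{\psi}(z,r/10)$ provides just enough slack to ensure $\ell_{\psi}(\gamma_{1})+\ell_{\psi}(\gamma_{2})<r$, giving $\sigma(z,w)<r$ and hence $w\in B_{H}(z,r)$.
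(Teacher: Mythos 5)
Your proposal is correct and follows essentially the same route as the paper's proof: the same two-segment path $z \to P_z w \to w$, the same preliminary check that the segments stay where Lemma \ref{comp in D} applies, and the same key cancellation $\langle \gamma_2(t),\gamma_2'(t)\rangle = t\lvert Q_z w\rvert^2$ for the transverse piece (your eigenvalue formulation for $\gamma_1$ is just a repackaging of the quadratic form used in the paper). The only point to tighten is that the final step needs explicit constants rather than "$\lesssim r/10$"; your estimates do deliver them (e.g. $\ell_{\psi}(\gamma_1)<4m$ and $\ell_{\psi}(\gamma_2)<4m$ with $m=r/10$), in line with the paper's $6m+4m<10m$.
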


\begin{proof}
  Recall that the distance induced by the metric $H_{\psi}$ is   
  \begin{align*}
    \sigma(z,w) = \inf_\gamma \ell_{\psi}(\gamma)
  \end{align*} 
  where the infimum is taken over all piecewise smooth curve ${\gamma}:[0,1] \rightarrow \Bn$ with $\gamma(0)=z$ and $\gamma(1)=w$ and the length induced by Hessian metric is 
  \begin{align*}
    \ell_{\psi}(\gamma) = \int_{0}^{1} \left\{ \frac{2}{\left(1-\lvert \gamma(t)\rvert^2\right)^{3}}\lvert \langle \gamma(t),\gamma'(t)\rangle\rvert^2 + \frac{1}{\left(1-\lvert \gamma(t)\rvert^2\right)^{2}} \lvert \gamma'(t)\rvert^2\right\}^{\frac{1}{2}} \diff t
  \end{align*} 
  for each curve $\gamma$.

  Suppose that $w$ belongs to $D_{\psi}(z,m)$ with $0<m<1$. We assume $w \neq z$ without loss of generality.
  We have $1-\lvert z \rvert^2 \simeq 1-\lvert w \rvert^2$ when $2m < C_0$ by Lemma \ref{comp in D}. 
  Let $\widehat{\gamma}_1$ be a line segment from $z$ to $P_z w$ and $\widehat{\gamma}_2$ be a line segment from $P_z w$ to $w$, 
precisely, 
$$
\widehat{\gamma}_1 (t)=(P_z w-z)t + z
$$ 
and 
$$
\widehat{\gamma}_2 (t)=(w-P_z w)t+P_z w=(Q_z w) t+P_z w.
$$ 
Let $\widehat{\gamma}$ be a parametrized curve for $\widehat{\gamma}_1 + \widehat{\gamma}_2$, then 
$$
\ell_{\psi}(\widehat{\gamma})=\ell_{\psi}(\widehat{\gamma}_1)+\ell_{\psi}(\widehat{\gamma}_2).
$$

  We have
  \begin{align*}
    \ell_{\psi}(\widehat{\gamma}_1) 
    &\le  \int_{0}^{1} \left\{\frac{\sqrt{2}}{\left(1-\lvert \widehat{\gamma}_1(t)\rvert^2\right)^{\frac{3}{2}}}\lvert \langle \widehat{\gamma}_1(t),\widehat{\gamma}_1'(t)\rangle\rvert + \frac{1}{1-\lvert \widehat{\gamma}_1(t)\rvert^2} \lvert \widehat{\gamma}_1'(t)\rvert\right\} \diff t \\ 
    &\le \frac{4}{\left(1-\lvert z \rvert^2\right)^{\frac{3}{2}}} \int_{0}^{1} \lvert \langle \widehat{\gamma}_1(t),\widehat{\gamma}_1'(t)\rangle\rvert \diff t + \frac{2}{1-\lvert z \rvert^2} \int_{0}^{1} \lvert \widehat{\gamma}_1'(t)\rvert \diff t 
  \end{align*}
  by Lemma \ref{comp in D}. 
  Note that $\widehat{\gamma}_1'(t) = P_z w - z $. The Cauchy-Schwartz inequality yields that 
  \begin{align*}
    \ell_{\psi}(\widehat{\gamma}_1) 
    \le& \frac{4}{\left(1-\lvert z \rvert^2\right)^{\frac{3}{2}}} \int_{0}^{1} \lvert \widehat{\gamma}_1(t)\rvert \lvert \widehat{\gamma}_1'(t)\rvert \diff t + 2\frac{\lvert z-P_z w\rvert}{1-\lvert z \rvert^2} \\ 
    \le& \frac{4}{\left(1-\lvert z \rvert^2\right)^{\frac{3}{2}}} \sup_{t} \lvert \widehat{\gamma}_1(t)\rvert \int_{0}^{1} \lvert \widehat{\gamma}'_1(t)\rvert \diff t + 2\frac{\lvert z-P_z w\rvert}{1-\lvert z \rvert^2}. 
  \end{align*}
  The fact $\sup_{t} \lvert \widehat{\gamma}_1(t)\rvert \le 1$ gives
  $$ \ell_{\psi}(\widehat{\gamma}_1) \le \frac{4}{\left(1-\lvert z \rvert^2\right)^{\frac{3}{2}}} \lvert z-P_z w\rvert+ 2\frac{\lvert z-P_z w\rvert}{1-\lvert z \rvert^2}
    < 6m
  $$ 
  when $w \in D_{\psi}(z,m).$

  For the length induced by $H_{\psi}$ of $\widehat{\gamma}_2$, we also have 
  \begin{align*}
    \ell_{\psi}(\widehat{\gamma}_2)
    &\le \frac{4}{\left(1-\lvert z \rvert^2\right)^{\frac{3}{2}}} \int_{0}^{1} \lvert \langle \widehat{\gamma}_2(t),\widehat{\gamma}_2'(t)\rangle\rvert \diff t + \frac{2}{1-\lvert z \rvert^2} \int_{0}^{1} \lvert \widehat{\gamma}_2'(t)\rvert \diff t
  \end{align*}
  by Lemma \ref{comp in D}. Note that $\widehat{\gamma}_2'(t) = Q_z w$.
  The fact that $P_z w$ and $Q_z w$ are perpendicular asserts
$$
\langle \widehat{\gamma}_2(t),\widehat{\gamma}_2'(t)\rangle = \left\langle (Q_z w)t+P_z w, Q_z w\right\rangle = t \lvert Q_z w\rvert^2.
$$
  Then, 
  \begin{align*}
    \ell_{\psi}(\widehat{\gamma}_2) 
    \le& \frac{4}{\left(1-\lvert z \rvert^2\right)^{\frac{3}{2}}} \lvert Q_z w\rvert^2 \int_{0}^{1} t \diff t + 2\frac{\lvert Q_z w\rvert}{1-\lvert z \rvert^2}\\
    \le& \frac{2}{\left(1-\lvert z \rvert^2\right)^{\frac{3}{2}}} \lvert Q_z w\rvert^2 + 2\frac{\lvert Q_z w\rvert}{1-\lvert z \rvert^2} \\
    <& 4m
  \end{align*} 
  when $w \in D_{\psi}(z,m).$

  As a result, we get $$\sigma(z,w) \le \ell_{\psi}(\widehat{\gamma}) < 10m$$ which implies $w\in  B_{H}(z, 10m)$. By putting $m=\frac{r}{10}$, it is obtained   
  \begin{align*}
    D_{\psi}\left(z, \frac{r}{10}\right) \subset B_{H}(z, r).
  \end{align*}
\end{proof}

\begin{proposition}\label{ball_size_comp2}
Let $0< r \le \frac{C_0}{20}$. Then
  \begin{align*}
    B_{H}(z, r) \subset D_{\psi}(z, 18r)
  \end{align*}
  where $C_0 = \frac{1}{2} \min\left\{{1,\frac{1}{\|1-\lvert z \rvert^2\|_L}}\right\}=\frac{1}{4}.$
\end{proposition}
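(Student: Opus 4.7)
The plan is to show $B_H(z,r)\subset D_\psi(z,18r)$ by taking an almost length-minimizing curve $\gamma$ from $z$ to $w$ and extracting from the length bound $\ell_\psi(\gamma)<r$ separate estimates on the radial component $|z-P_z w|$ and the tangential component $|Q_z w|$. This mirrors the strategy of Proposition \ref{ball_size_comp1} but in reverse: there one constructs an explicit path and bounds its length from above, whereas here I must bound the endpoint displacement from below using an arbitrary Hessian-short curve.

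First I would establish the analog of Lemma \ref{comp in D} along the curve. By Lemma \ref{lem for H}(d), the smallest eigenvalue of $H_\psi(\gamma(t))$ equals $(1-|\gamma(t)|^2)^{-2}$, so $\sqrt{\langle H_\psi(\gamma)\gamma',\gamma'\rangle}\ge|\gamma'|/(1-|\gamma|^2)$. Since $f(t):=1-|\gamma(t)|^2$ satisfies $|f'(t)|\le 2|\gamma'(t)|$, a logarithmic Gr\"onwall argument combined with $r\le C_0/20$ yields $1-|\gamma(t)|^2\simeq 1-|z|^2$ uniformly in $t$. Multiplying the pointwise inequality through by $1-|\gamma|^2$ and integrating gives the Euclidean arc-length bound $|w-z|\le\int_0^1|\gamma'(t)|\,dt\le 2(1-|z|^2)r$, which immediately produces the tangential estimate $|Q_z w|=|Q_z(w-z)|\le|w-z|\le 2(1-|z|^2)r<18r(1-|z|^2)$.

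The radial bound $|z-P_zw|<18r(1-|z|^2)^{3/2}$ is more delicate and splits into two cases. For $|z|\le 1/2$ the crude estimate $|z-P_zw|\le|w-z|\le 2(1-|z|^2)r$ is already enough, since $(1-|z|^2)^{1/2}\ge\sqrt{3}/2$ then forces $2(1-|z|^2)r\le 18r(1-|z|^2)^{3/2}$. For $|z|\ge 1/2$, I would exploit the larger eigenvalue on the radial direction: Lemma \ref{lem for H}(a) gives $H_\psi(\gamma)\ge 2|\gamma|^2 P_\gamma/(1-|\gamma|^2)^3$, hence $|P_\gamma\gamma'|\le\tfrac{(1-|\gamma|^2)^{3/2}}{\sqrt{2}\,|\gamma|}\sqrt{\langle H_\psi(\gamma)\gamma',\gamma'\rangle}$. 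Splitting $P_z\gamma'=P_\gamma\gamma'+(P_z-P_\gamma)\gamma'$ and using the operator bound $\|P_z-P_{\gamma(t)}\|\lesssim|z-\gamma(t)|/|z|$ (valid for $|z|$ away from $0$) together with the Euclidean arc-length estimate from the previous step yields
\[
|z-P_zw|\le\int_0^1|P_z\gamma'(t)|\,dt\lesssim(1-|z|^2)^{3/2}r+\frac{(1-|z|^2)^2 r^2}{|z|},
\]
and the second term is absorbed into the first by the smallness of $r$ together with $|z|\ge 1/2$.

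The main obstacle lies in the radial estimate: the projection $P_z$ from Lemma \ref{lem for H}(d) is discontinuous at the origin (where $P_0:=0$), so the useful operator comparison $\|P_z-P_\gamma\|\lesssim|z-\gamma|/|z|$ degenerates for small $|z|$, forcing the split into two regimes. Beyond this, arranging the numerical constants to land precisely at $18$ is routine bookkeeping rather than a genuinely new difficulty.
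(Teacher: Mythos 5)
Your proof is correct, and its skeleton matches the paper's: both arguments lower-bound the Hessian length of an (almost) minimizing curve, using the small eigenvalue $(1-\lvert \gamma\rvert^2)^{-2}$ for the Euclidean/tangential estimate, disposing of the radial estimate trivially when $\lvert z\rvert\le \tfrac12$, and invoking the large radial eigenvalue when $\lvert z\rvert>\tfrac12$. The implementations differ in two places. For comparability of $1-\lvert\gamma(t)\rvert^2$ with $1-\lvert z\rvert^2$ along the curve, the paper uses a stopping-time argument (the first time $\lvert z-\gamma(t)\rvert$ reaches $\min\{s,\tfrac{C_0}{10}\}(1-\lvert z\rvert^2)$) combined with Lemma \ref{L1->radius}, while you integrate $\lvert \tfrac{d}{dt}\log(1-\lvert\gamma\rvert^2)\rvert \le 2\lvert\gamma'\rvert/(1-\lvert\gamma\rvert^2)$ against the length bound; your Gr\"onwall version is cleaner and gives the same conclusion. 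In the hard radial step the paper keeps the fixed decomposition $\gamma=P_z\gamma+Q_z\gamma$, uses $\lvert\langle\gamma,\gamma'\rangle\rvert\ge\lvert\langle\gamma_1,\gamma_1'\rangle\rvert-\lvert\langle\gamma_2,\gamma_2'\rangle\rvert$ (the cross terms vanish exactly), and absorbs the tangential contribution through the quadratic inequality $\ell_{\psi}(\gamma)\ge\tfrac{37}{160}(1-\lvert z\rvert^2)^{-3/2}\lvert z-P_zw\rvert-2(1-\lvert z\rvert^2)^{1/2}\ell_{\psi}(\gamma)^2$; you instead bound $\lvert P_{\gamma(t)}\gamma'(t)\rvert$ pointwise by the radial eigenvalue and pass from the moving projection $P_{\gamma(t)}$ to the fixed $P_z$ via $\|P_z-P_{\gamma(t)}\|\lesssim \lvert z-\gamma(t)\rvert/\lvert z\rvert$, producing an error of order $(1-\lvert z\rvert^2)^2r^2/\lvert z\rvert$ that is absorbed because $r\le\tfrac1{80}$ and $\lvert z\rvert\ge\tfrac12$. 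Both devices address the same mismatch between the fixed frame defining $D_{\psi}(z,\cdot)$ and the moving radial direction of the metric; the paper's route avoids any perturbation estimate for projections, whereas yours keeps everything pointwise and linear in $\ell_{\psi}(\gamma)$, and a rough count of your constants (about $2e^{1/20}r(1-\lvert z\rvert^2)$ for the Euclidean step and about $4r(1-\lvert z\rvert^2)^{3/2}$ for the radial step, using $\lvert\gamma(t)\rvert\ge\tfrac12-4r$) lands comfortably below $18$, so your closing remark that the constant is routine bookkeeping is justified.
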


\begin{proof}
  Suppose that $w$ belongs to $B_{H}(z, r)$. We assume $w \neq z$ without loss of generality. The proof is divided into three steps.

  {\bf Step 1.} We will show that $\sigma(z, w) <r$ implies $\lvert z-w\rvert \le 2r(1-\lvert z \rvert^2)$ and $\lvert Q_z(w)\rvert \le 2r(1-\lvert z \rvert^2)$. 

  Suppose $\sigma(z, w) <r$. As in Proposition 5 in \cite{MR3406539},
  let 
  \begin{align*}
    s=\frac{\lvert z-w\rvert}{1-\lvert z \rvert^2}.
  \end{align*}
  For any piecewise $C^1$ curve $\gamma$ from $z$ to $w$, let $T_0 \in (0,1]$ be the minimum of $t$ satisfying 
  \begin{align*}
    \lvert z- \gamma(t)\rvert = \min\left\{s,\frac{C_0}{10}\right\}(1-\lvert z \rvert^2) 
  \end{align*}
  where $C_0 = \frac{1}{2} \min\left\{{1,\frac{1}{\|1-\lvert z \rvert^2\|_L}}\right\}.$
  It gives
  \begin{align*}
    \frac{1}{2}(1-\lvert z \rvert^2) \le 1-\lvert \gamma(t)\rvert^2 \le 2(1-\lvert z \rvert^2) \quad \text{for}\quad t \in [0,T_0]
  \end{align*} 
  by Lemma \ref{L1->radius}. We have
  \begin{align}
    \begin{split}\label{Eucli}
    \ell_{\psi}(\gamma) 
    &\ge \int_{0}^{1} \frac{1}{1-\lvert \gamma(t)\rvert^2} \lvert \gamma'(t)\rvert \diff t\\
    &\ge \int_{0}^{T_0} \frac{1}{1-\lvert \gamma(t)\rvert^2} \lvert \gamma'(t)\rvert \diff t\\
    &\ge \frac{1}{2(1-\lvert z \rvert^2)} \int_{0}^{T_0} \lvert \gamma'(t)\rvert \diff t.
    \end{split}
  \end{align}
  Since $\int_{0}^{T_0} \lvert \gamma'(t)\rvert \diff t$ is the Euclidean length of the curve $\gamma$ from $0$ to $T_0$, $$\ell_{\psi}(\gamma) \ge \frac{1}{2} \min\left\{s,\frac{C_0}{10}\right\}.$$ 
  Because \eqref{Eucli} holds for arbitrary $\gamma$ connecting $z$ and $w$, 
  \begin{align*}
    \sigma(z,w) \ge \frac{1}{2} \min\left\{s,\frac{C_0}{10}\right\}.
  \end{align*}
  By the assumption $w \in B_{H}(z, r)$, it is obtained
  \begin{align*}
    \frac{C_0}{20} \ge r > \sigma(z,w) \ge \frac{1}{2} \min\left\{{s,\frac{C_0}{10}}\right\} 
  \end{align*}
  which is a contradiction whenever $s\ge \frac{C_0}{10}$. 
  Hence we have $s < \frac{C_0}{10}$ when $w \in B_{H}(z, r)$. 
  It gives $$r > \min\left\{{s,\frac{C_0}{10}}\right\} =\frac{1}{2} s,$$ 
  which asserts
  \begin{align}\label{B_H radius}
    \lvert z-w\rvert < 2r(1-\lvert z \rvert^2) \quad \text{for} \quad w \in B_{H}(z, r).
  \end{align}

  Also, \eqref{B_H radius} yields
  \begin{align}\label{concl_step1}
    \lvert Q_z w\rvert \le \lvert z-w\rvert<2r(1-\lvert z \rvert^2) \quad \text{for} \quad w \in B_{H}(z, r)
  \end{align}
  since $z-P_z w$, $z-w$, and $Q_z w$ construct a right triangle with hypotenuse $z-w$.
  ~\\

  Now, we will show that $\sigma(z, w) <r$ implies $\lvert z-P_z w\rvert \lesssim r(1-\lvert z \rvert^2)^{\frac{3}{2}}$. It divides into the cases of $\lvert z \rvert \le \frac{1}{2}$ and $\lvert z \rvert > \frac{1}{2}$. 

  {\bf Step 2.} 
  We assume that $\lvert z \rvert \le \frac{1}{2}$. 
  \eqref{B_H radius} in Step 1 gives $$B_{H}(z, r) \subset B(z, 2r)$$ where $B(z,r)$ is the Euclidean ball centered at $z$ with radius $r>0$. 
  Since $\lvert z \rvert\le \frac{1}{2}$ means $\frac{3}{4} \le 1-\lvert z \rvert^2 \le 1$, it is obtained
  $$\lvert z-w\rvert <2r = \frac{16}{\sqrt{27}} r \left(\frac{3}{4}\right)^{\frac{3}{2}} \le \frac{16}{\sqrt{27}} r (1-\lvert z \rvert^2)^{\frac{3}{2}} <4r(1-\lvert z \rvert^2)^{\frac{3}{2}}.$$ 
  Because $z-P_z w$, $z-w$, and $Q_z w$ construct a right triangle with hypotenuse $z-w$, we have $\lvert z-P_z w\rvert \le \lvert z-w\rvert$. 

  Therefore,
  \begin{align}\label{concl_step2}
    \lvert z-P_z w\rvert <4r(1-\lvert z \rvert^2)^{\frac{3}{2}}  \quad \text{for} \quad w \in B_{H}(z, r)
  \end{align}
  when $\lvert z \rvert \le \frac{1}{2}$.

  {\bf Step 3.} 
  We assume that $\lvert z \rvert > \frac{1}{2}$.
  Suppose that $\sigma(z, w)<r$. 
  We also assume $z \neq P_z w$ without loss of generality. 
  Hereinafter, we consider only the curves $\gamma$ connecting $z$ and $w$ satisfying $$\ell_{\psi}(\gamma) \le 2\sigma(z, w).$$ 
  For each curve $\gamma$, define $\gamma_1(t)=P_z(\gamma(t))$ and $\gamma_2(t)=\gamma(t)-\gamma_1(t)= Q_z(\gamma(t))$.
  Let 
  $t_0 \in [0,1]$ be the minimum of $t$ such that 
  \begin{align*}
    \lvert z- \gamma_1(t)\rvert = \lvert z-P_z w\rvert. 
  \end{align*}
  Similar to Lemma \ref{L1->radius}, we have comparable property in the ball $B_{H}(z, r)$ by \eqref{B_H radius}, precisely,
  \begin{align}\label{comp in B_H}
    \frac{1}{2}(1-\lvert z \rvert^2) \le 1-\lvert w \rvert^2 \le 2(1-\lvert z \rvert^2) \quad \text{for} \quad w \in B_{H}(z, r). 
  \end{align} 
  Thus, we have
  \begin{align*}
    \ell_{\psi}(\gamma) 
    &\ge \int_{0}^{1} \frac{\sqrt{2}}{\left(1-\lvert \gamma(t)\rvert^2\right)^{\frac{3}{2}}}\lvert \langle \gamma(t),\gamma'(t)\rangle\rvert \diff t\\
    &\ge \frac{1}{2\left(1-\lvert z \rvert^2\right)^{\frac{3}{2}}} \int_{0}^{t_0} \lvert \langle \gamma(t),\gamma'(t)\rangle\rvert \diff t\\
    &\ge \frac{1}{2\left(1-\lvert z \rvert^2\right)^{\frac{3}{2}}} \int_{0}^{t_0} \left\{\lvert \langle \gamma_1(t),\gamma_1'(t)\rangle\rvert-\lvert \langle \gamma_2(t),\gamma_2'(t)\rangle\rvert\right\} \diff t.
  \end{align*}

  Since $\gamma_1$ and $\gamma_1'$ are parallel, 
  \begin{align}\label{gamma1}
    \int_{0}^{t_0} \lvert \langle \gamma_1(t),\gamma_1'(t)\rangle\rvert \diff t =\int_{0}^{t_0} \lvert \gamma_1(t)\rvert\lvert \gamma_1'(t)\rvert \diff t \ge \inf_{0\le t \le t_0} \lvert \gamma_1(t)\rvert \int_{0}^{t_0} \lvert \gamma_1'(t)\rvert \diff t. 
  \end{align}
  The hypothesis $\ell_{\psi}(\gamma) \le 2\sigma(z, w)$ gives 
  $$\sigma(z, \gamma_1 (t)) < \sigma(z, \gamma (t)) < 2r.$$ 
  Inequality \eqref{B_H radius} and $0 < 1-\lvert z \rvert^2 < \frac{3}{4}$ yield
  $$\gamma_1 (t) \in B_{H} (z, 2r) \subset B (z, 4r(1-\lvert z \rvert^2)) \subset B (z, 3r).$$
  It gives $$\inf_{0\le t \le t_0} \lvert \gamma_1(t)\rvert \ge \lvert z \rvert - 3r > \frac{1}{2} - \frac{3}{80} =\frac{37}{80}$$
  with $0<r\le \frac{C_0}{20} = \frac{1}{80}$.
  Hence, we have $$\int_{0}^{t_0} \lvert \langle \gamma_1(t),\gamma_1'(t)\rangle\rvert \diff t 
  \ge \frac{37}{80} \lvert z-P_z w\rvert$$
  from \eqref{gamma1} and $\int_{0}^{t_0} \lvert \gamma_1'(t)\rvert \diff t \ge \lvert z-P_z w\rvert$.

  By Cauchy-Schwartz inequality,
  \begin{align*}
    \int_{0}^{t_0} \lvert \langle \gamma_2(t),\gamma_2'(t)\rangle\rvert \diff t
    \le \int_{0}^{t_0} \lvert  \gamma_2(t) \rvert \lvert  \gamma_2'(t)\rvert \diff t
    \le  \sup_{0\le t \le t_0} \lvert  \gamma_2(t) \rvert \int_{0}^{t_0} \lvert  \gamma_2'(t)\rvert \diff t.
  \end{align*}
  Let $t^* \in [0,t_0]$ satisfy 
$$
\sup_{0\le t \le t_0} \lvert \gamma_2(t) \rvert = \lvert \gamma_2(t^*)\rvert = 
\lvert Q_z(\gamma(t^*))\rvert.
$$
  Since $z-\gamma_1(t^*)=z-P_z(\gamma(t^*))$, $z-\gamma(t^*)$ 
and $\gamma_2(t^*)=Q_z(\gamma(t^*))$ 
construct a right triangle with hypotenuse $z-\gamma(t^*)$, we have
  $$\sup_{0\le t \le t_0} \lvert  \gamma_2(t) \rvert = \lvert Q_z (\gamma(t^*)) \rvert \le \lvert z-\gamma(t^*)\rvert \le \int_{0}^{t*} \lvert  \gamma'(t)\rvert \diff t \le \int_{0}^{t_0} \lvert  \gamma'(t)\rvert \diff t. $$ 
  Then, we get
  \begin{align*}
    \int_{0}^{t_0} \lvert \langle \gamma_2(t),\gamma_2'(t)\rangle\rvert \diff t
    \le \left\{\int_{0}^{t_0} \lvert  \gamma'(t)\rvert \diff t\right\}^2
    \le 4(1-\lvert z \rvert^2)^2 \ell_{\psi}(\gamma)^2
  \end{align*}
  since $\int_{0}^{t_0} \lvert \gamma'(t)\rvert \diff t \le 2(1-\lvert z \rvert^2) \ell_{\psi}(\gamma) $ as in \eqref{Eucli}.

  Therefore, we obtain  
  \begin{align*}
    \ell_{\psi}(\gamma)
    &\ge \frac{37}{160} \frac{1}{\left(1-\lvert z \rvert^2\right)^{\frac{3}{2}}} \lvert z-P_z w\rvert - 2\left(1-\lvert z \rvert^2\right)^{\frac{1}{2}} \ell_{\psi}(\gamma)^2
  \end{align*}
  which implies
  \begin{align*}
    \frac{37}{160} \frac{1}{\left(1-\lvert z \rvert^2\right)^{\frac{3}{2}}} \lvert z-P_z w\rvert
    \le 2 \ell_{\psi}(\gamma)
    \le 4 \sigma(z,w)
    < 4r.
  \end{align*}
  It shows
  \begin{align}\label{concl_step3}
    \lvert z-P_z w\rvert < 18r \left(1-\lvert z \rvert^2\right)^{\frac{3}{2}} \quad \text{for} \quad w \in B_{H}(z, r)
  \end{align}
  when $\lvert z \rvert > \frac{1}{2}$.

  Finally, we get the desired result 
  $$
  B_{H}(z, r) \subset D_{\psi}(z, 18r) 
  $$ 
  by gathering with \eqref{concl_step1}, \eqref{concl_step2}, and \eqref{concl_step3}. 
\end{proof}

From Propositions \ref{ball_size_comp1} and \ref{ball_size_comp2}, we have the following theorem.
\begin{theorem}\label{ball_size_comp}
   For $0< r \le \frac{C_0}{20}$, there is a constant $\delta >1$ depending only on $C_0$ satisfying
  \begin{align*}
    D_{\psi}(z, \delta^{-1}r) \subset B_{H}(z, r) \subset D_{\psi}(z, \delta r)
  \end{align*}
  where $C_0 = \frac{1}{2} \min\left\{{1,\frac{1}{\|1-\lvert z \rvert^2\|_L}}\right\} = \frac{1}{4}.$
\end{theorem}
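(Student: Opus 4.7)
The statement is a direct packaging of Propositions \ref{ball_size_comp1} and \ref{ball_size_comp2}, and the plan is simply to combine those two one-sided inclusions into a single symmetric statement with a uniform constant $\delta>1$.

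First, I would record the two inclusions already established: for every $z\in\Bn$ and every $0<r\le C_0/20$,
\begin{align*}
   D_{\psi}\!\left(z,\tfrac{r}{10}\right)\subset B_{H}(z,r)
   \qquad\text{and}\qquad
   B_{H}(z,r)\subset D_{\psi}(z,18r).
\end{align*}
The first is Proposition \ref{ball_size_comp1} and the second is Proposition \ref{ball_size_comp2}. Both hold uniformly in $z\in\Bn$ with a constant depending only on $C_0$ (which was computed to be $1/4$).

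Next, I would choose $\delta$ large enough that both inclusions can be rewritten with the single radius parameter $\delta$. The right-hand inclusion already has the form $B_H(z,r)\subset D_\psi(z,18r)$, which forces $\delta\ge 18$. For the left-hand inclusion, it suffices to note that $D_\psi(z,\,\cdot\,)$ is monotone increasing in its radius, so for any $\delta\ge 10$ we have
\begin{align*}
   D_{\psi}(z,\delta^{-1}r)\subset D_{\psi}\!\left(z,\tfrac{r}{10}\right)\subset B_{H}(z,r).
\end{align*}
Therefore taking $\delta:=18$ (which depends only on $C_0$) yields both inclusions simultaneously, with $\delta^{-1}r\le r/10<C_0/20$, so the hypothesis of Proposition \ref{ball_size_comp1} is satisfied by this smaller radius whenever it is satisfied by $r$.

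There is no real obstacle here; the geometric and metric content was absorbed into the two preceding propositions, whose proofs relied on the eigenvalue decomposition from Lemma \ref{lem for H}(d) and the comparability $1-|z|^2\simeq 1-|w|^2$ on $D_\psi(z,r)$ from Lemma \ref{comp in D}. The present theorem is the clean corollary that the $\psi$-Hessian ball $B_H(z,r)$ and the anisotropic box $D_\psi(z,r)$ agree up to a fixed dilation, which is exactly the shape needed for the covering and test-function estimates referenced in Section \ref{sec1}.
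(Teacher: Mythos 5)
Your proposal is correct and matches the paper exactly: the paper states Theorem \ref{ball_size_comp} as an immediate consequence of Propositions \ref{ball_size_comp1} and \ref{ball_size_comp2}, precisely the combination (with a uniform $\delta$, e.g.\ $\delta=18$) that you spell out. The only extra content in your write-up is the harmless observation about monotonicity of $D_\psi(z,\cdot)$ in the radius, which is all that is needed to merge the constants $10$ and $18$ into a single $\delta$.
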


\begin{corollary}\label{cor_vol}
  For $0< r_1, r_2 \le \frac{C_0}{20}$, we have
  \begin{align*}
    v(B_{H}(z, r_1)) \simeq v(D_{\psi}(z,r_2)) \simeq \left(1-\lvert z \rvert^2\right)^{2n+1}.
  \end{align*}
  where $C_0 = \frac{1}{2} \min\left\{{1,\frac{1}{\|1-\lvert z \rvert^2\|_L}}\right\} = \frac{1}{4}.$
\end{corollary}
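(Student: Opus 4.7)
The plan is to reduce the corollary to an explicit computation of $v(D_{\psi}(z,r))$ and then invoke Theorem \ref{ball_size_comp} to transfer the estimate from $D_{\psi}$ to $B_{H}$. Since $0 < r_1 \le C_0/20$, Theorem \ref{ball_size_comp} gives a $\delta>1$ (depending only on $C_0$) with
\begin{align*}
D_{\psi}(z,\delta^{-1}r_1) \subset B_{H}(z,r_1) \subset D_{\psi}(z,\delta r_1),
\end{align*}
so by monotonicity of $v$ it is enough to show $v(D_{\psi}(z,r)) \simeq (1-\lvert z \rvert^2)^{2n+1}$ for each fixed $r$ in the allowed range, with constants depending only on $n$ and $r$.

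For the key computation I would fix $z \in \Bn \setminus \{0\}$ and choose a unitary $U$ on $\Cn$ sending $z/\lvert z \rvert$ to the first standard basis vector $e_1$. Writing $Uw=(w_1,w_2,\ldots,w_n)$, the operators $P_z$ and $Q_z$ become projection onto the first coordinate and onto the last $n-1$ coordinates, respectively, and the defining conditions of $D_{\psi}(z,r)$ become
\begin{align*}
\lvert \, \lvert z \rvert - w_1 \rvert < r(1-\lvert z \rvert^2)^{3/2}, \qquad \lvert (w_2,\ldots,w_n) \rvert < r(1-\lvert z \rvert^2).
\end{align*}
Since $v$ is unitarily invariant, Fubini yields
\begin{align*}
v(D_{\psi}(z,r)) = \pi\bigl[r(1-\lvert z \rvert^2)^{3/2}\bigr]^{2}\cdot \frac{\pi^{n-1}}{(n-1)!}\bigl[r(1-\lvert z \rvert^2)\bigr]^{2(n-1)} = \frac{\pi^{n}}{(n-1)!}\,r^{2n}(1-\lvert z \rvert^2)^{2n+1}.
\end{align*}

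The case $z=0$ needs to be handled separately because $P_0=0$ collapses the product structure: here $D_{\psi}(0,r) = \{w : \lvert w \rvert < r\}$, whose volume is $\frac{\pi^{n}}{n!}r^{2n}$, still of the form $r^{2n}(1-\lvert 0 \rvert^2)^{2n+1}$ up to a dimensional constant. Combining the two cases gives $v(D_{\psi}(z,r_2)) \simeq (1-\lvert z \rvert^2)^{2n+1}$ uniformly in $z \in \Bn$, and plugging $r=\delta^{\pm 1}r_1$ into this identity together with the sandwich above yields $v(B_{H}(z,r_1)) \simeq (1-\lvert z \rvert^2)^{2n+1}$. The only mild obstacle is the bookkeeping of how the comparability constants depend on $r_1, r_2$, and $\delta$; nothing in the argument is delicate once the unitary reduction is in place.
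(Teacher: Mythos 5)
Your proposal is correct and is essentially the argument the paper intends: the corollary is stated as an immediate consequence of Theorem \ref{ball_size_comp} together with the fact that $D_{\psi}(z,r)$ is (after a unitary rotation) a product of a disc of radius $r(1-\lvert z\rvert^2)^{3/2}$ and a ball of radius $r(1-\lvert z\rvert^2)$ in $\C^{n-1}$, whose volume is a constant times $r^{2n}(1-\lvert z\rvert^2)^{2n+1}$. The unitary reduction, the separate treatment of $z=0$, and the sandwich $D_{\psi}(z,\delta^{-1}r_1)\subset B_{H}(z,r_1)\subset D_{\psi}(z,\delta r_1)$ are exactly what is needed, so no further comment is required.
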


By triangle inequality of the distance $\sigma$, we get the following lemma.

\begin{lem}\label{z w inclusion}
  Given $r>0$, there are $r_1, r_2 > 0$ such that
  $$B_H(w,r_1 ) \subset B_H(z,r) \subset B_H(w,r_2)$$ whenever $\sigma(z,w)<r.$
\end{lem}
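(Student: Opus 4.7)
My plan is to derive the lemma directly from the triangle inequality for the intrinsic distance $\sigma$ on $\Bn$, which is a bona fide metric since it arises as the infimum of lengths of connecting curves with respect to the Hessian metric $H_\psi$. I take the radii uniformly as $r_1 := r/2$ and $r_2 := 2r$; both depend only on $r$, as required.

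For the outer inclusion $B_H(z, r) \subset B_H(w, r_2)$, given any $u \in B_H(z, r)$ and any pair with $\sigma(z, w) < r$, the triangle inequality gives
$$\sigma(u, w) \le \sigma(u, z) + \sigma(z, w) < r + r = 2r = r_2,$$
so $u \in B_H(w, r_2)$; this inclusion is uniform in $(z,w)$ and requires nothing beyond the hypothesis as stated.

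For the inner inclusion $B_H(w, r_1) \subset B_H(z, r)$, given $u \in B_H(w, r_1)$, the triangle inequality yields
$$\sigma(u, z) \le \sigma(u, w) + \sigma(w, z) < r_1 + \sigma(z, w),$$
which is at most $r$ provided $\sigma(z, w) \le r - r_1 = r/2$. I read the hypothesis ``whenever $\sigma(z,w) < r$'' as supplying this buffer between centers — the standard convention for such packaging lemmas — so that under this reading the inclusion also holds uniformly with $r_1 = r/2$.

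The main (essentially only) obstacle is this interpretive point: if one allows $\sigma(z, w) \to r$ in the literal sense, no uniform positive $r_1$ can survive, since $B_H(w, r_1)$ must sit strictly inside $B_H(z, r)$ and the triangle inequality is tight. Once the hypothesis is read as furnishing the implicit buffer $\sigma(z,w) \le r_1$, the two short triangle-inequality computations above close the argument and deliver both inclusions with the uniform constants $(r_1, r_2) = (r/2, 2r)$. No further structure of $\Bn$ or of $H_\psi$ (beyond metric axioms for $\sigma$) is invoked.
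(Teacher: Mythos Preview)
Your approach via the triangle inequality is exactly the paper's: its entire proof reads ``By triangle inequality of the distance $\sigma$, we get the following lemma.'' Your outer inclusion with $r_2 = 2r$ is correct and is all that argument actually yields.

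You have also correctly diagnosed a genuine defect in the \emph{statement}, not in your method: taken literally, no uniform $r_1>0$ can make $B_H(w,r_1)\subset B_H(z,r)$ hold for \emph{all} $w$ with $\sigma(z,w)<r$, since $w$ may sit arbitrarily close to the boundary of $B_H(z,r)$. But your resolution --- declaring that the hypothesis ``really'' means $\sigma(z,w)\le r/2$ --- is not a proof of the lemma as written; it is a silent amendment of it. What you have actually established is the (true, and adequate for the covering lemmas that follow) statement: if $\sigma(z,w)<r/2$, then
\[
B_H(w,r/2)\subset B_H(z,r)\subset B_H(w,2r).
\]
That is almost certainly what the authors intend, and it is what the downstream Lemmas (modeled on Zhu's Lemmas~2.22--2.23) require. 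So: flag the imprecision explicitly, state and prove the corrected version above, and note that it suffices for the applications. Do not present the reinterpretation as if it were a proof of the printed claim.
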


We have covering lemmas for $\psi$-Hessian balls 
by the same way in Lemmas 2.22 and 2.23 in \cite{MR2115155} using Lemma \ref{z w inclusion}.

\begin{lem}
  Let $R$ be a positive number and $m$ be a positive integer. Then there exists a positive integer $N$ such that every ball $B_{H}(z, r)$ with $r \le R$ can be covered by $N$ balls $B_{H}(a_k, \frac{r}{m})$.
\end{lem}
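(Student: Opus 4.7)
The plan is to construct a maximal $\tfrac{r}{m}$-separated net in $B_H(z,r)$ and bound its cardinality by a volume comparison, following the strategy of Lemmas 2.22--2.23 in \cite{MR2115155}. Choose $\{a_k\}_{k=1}^N\subset B_H(z,r)$ maximal with $\sigma(a_j,a_k)\ge \tfrac{r}{m}$ for all $j\ne k$. Maximality forces every $w\in B_H(z,r)$ to lie within $\sigma$-distance $\tfrac{r}{m}$ of some $a_k$, whence $\{B_H(a_k,\tfrac{r}{m})\}_{k=1}^N$ covers $B_H(z,r)$. The triangle inequality for $\sigma$ gives that the shrunken balls $B_H(a_k,\tfrac{r}{2m})$ are pairwise disjoint, and they all lie in $B_H(z,r+\tfrac{r}{2m})\subset B_H(z,2R)$.

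To bound $N$, I would sum the disjointness inequality and compare Lebesgue volumes:
\[
\sum_{k=1}^N v\bigl(B_H(a_k,\tfrac{r}{2m})\bigr) \;\le\; v\bigl(B_H(z,2R)\bigr).
\]
The target is to show that each summand on the left and the term on the right are all comparable to $(1-\lvert z\rvert^2)^{2n+1}$ with constants depending only on $R$ and $m$. For the lower bound, I replace $\tfrac{r}{2m}$ by $\min\{\tfrac{r}{2m},\tfrac{C_0}{20}\}$ so that Corollary \ref{cor_vol} applies and yields $v(B_H(a_k,\cdot))\simeq (1-\lvert a_k\rvert^2)^{2n+1}$; then $\sigma(z,a_k)<r\le R$ combined with Theorem \ref{ball_size_comp} places $a_k$ inside $D_{\psi}(z,\delta r)$, and Lemma \ref{comp in D} gives $1-\lvert a_k\rvert^2\simeq 1-\lvert z\rvert^2$. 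The upper bound $v(B_H(z,2R))\lesssim (1-\lvert z\rvert^2)^{2n+1}$ is obtained symmetrically, after first invoking Lemma \ref{z w inclusion} to sandwich $B_H(z,2R)$ between Hessian balls of small radius. Consequently $N\le C(R,m)$.

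The main obstacle is that Corollary \ref{cor_vol} and Lemma \ref{comp in D} are confined to radii at most $\tfrac{C_0}{20}$ and $\tfrac{C_0}{2}$ respectively, whereas $R$ is arbitrary. I would resolve this by an inductive bootstrap on $R$: first establish the covering lemma directly for $R\le \tfrac{C_0}{40}$ via the argument above, then for larger $R$ cover $B_H(z,2R)$ by finitely many small-radius Hessian balls centered at a net obtained from the already proved small-radius case; each such small ball can absorb only a bounded number of the $a_k$'s (again by the small-radius volume argument), so iterating this scheme yields a finite, $R$- and $m$-dependent, value of $N$ for arbitrary $R$. The induction step uses only the triangle inequality for $\sigma$ together with Lemma \ref{z w inclusion}, both of which are scale-free.
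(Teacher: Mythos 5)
Your overall strategy (a maximal $\tfrac{r}{m}$-separated net inside $B_H(z,r)$ plus a volume comparison) is the natural adaptation of Zhu's Lemmas 2.22--2.23, and the paper offers no proof beyond citing \cite{MR2115155}, so the route itself is reasonable. However, there is a genuine quantitative gap in your volume step. The comparability constants in Corollary \ref{cor_vol} depend on the radius: one has $v(D_{\psi}(a,s))\simeq s^{2n}(1-\lvert a\rvert^2)^{2n+1}$ exactly (a disc of radius $s(1-\lvert a\rvert^2)^{3/2}$ times a ball of radius $s(1-\lvert a\rvert^2)$ in $\mathbb{C}^{n-1}$), hence by Theorem \ref{ball_size_comp} $v(B_H(a,\rho))\simeq \rho^{2n}(1-\lvert a\rvert^2)^{2n+1}$ for $\rho\le \tfrac{C_0}{20}$, and \emph{not} $v(B_H(a,\rho))\simeq (1-\lvert a\rvert^2)^{2n+1}$ uniformly in $\rho$. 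So your claim that each $v\bigl(B_H(a_k,\tfrac{r}{2m})\bigr)$ and $v(B_H(z,2R))$ are all comparable to $(1-\lvert z\rvert^2)^{2n+1}$ with constants depending only on $R$ and $m$ fails for small $r$: the left-hand terms carry a factor $(\tfrac{r}{2m})^{2n}$ that the right-hand term does not, and your comparison then yields at best $N\lesssim (mR/r)^{2n}$, which blows up as $r\to 0$ and does not produce the single integer $N$ valid for all $r\le R$ that the lemma (and the lattice construction of Lemma \ref{covering lem 2}) requires. The repair is standard: compare the disjoint balls $B_H(a_k,\tfrac{r}{2m})$ with $B_H\bigl(z,r(1+\tfrac{1}{2m})\bigr)$, which you already observed contains them, bound its volume by $\lesssim r^{2n}(1-\lvert z\rvert^2)^{2n+1}$ via Proposition \ref{ball_size_comp2}, use $1-\lvert a_k\rvert^2\simeq 1-\lvert z\rvert^2$ (from $a_k\in B_H(z,r)\subset D_{\psi}(z,18r)$ and Lemma \ref{comp in D}), and keep the factor $r^{2n}$ on both sides so that it cancels; this gives $N\le C(n,m)$ whenever $r$ is small enough that these propositions apply.

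Your second concern, the restriction to small radii, is real, and the bootstrap you sketch can be made to work, but as written it assumes the very thing that needs proving: ``cover $B_H(z,2R)$ by finitely many small-radius Hessian balls'' with a count uniform in $z$ is itself a covering statement that does not follow from Lemma \ref{z w inclusion} and the triangle inequality alone. A clean way to complete it: after the small-radius case is established with centers inside the ball, the case $m=2$ at a fixed scale $\rho_0\le \tfrac{C_0}{80}$ is a doubling statement; chaining it along curves (subdivide a path of length $<r$ into $\lceil r/\rho_0\rceil$ pieces and use the triangle inequality) covers any $B_H(z,r)$, $r\le R$, by at most $C(R,n)$ balls of radius $\rho_0$, and one final application of the small-radius lemma inside each such ball produces radius $\tfrac{r}{m}$ in the regime $\tfrac{r}{m}<\rho_0$ (when $\tfrac{r}{m}\ge \rho_0$ the $\rho_0$-cover already suffices, since then $r>m\rho_0$ is bounded below). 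With the volume step corrected and the chaining argument spelled out, your proof goes through.
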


\begin{lem}\label{covering lem 2}
  There is a positive integer $N$ such that for any $0 < r \le 1$ we can find a sequence $\{a_k\}$ in $\Bn$ with the following properties:
  \begin{itemize}
    \item[(1)] $\Bn = \cup_k B_{H} (a_k, r)$.
    \item[(2)] The sets $B_{H} (a_k, r/4)$ are mutually disjoint.
    \item[(3)] Each point $z \in \Bn$ belongs to at most $N$ of the sets $B_{H} (a_k, 4r)$.
  \end{itemize}
\end{lem}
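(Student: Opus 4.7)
My plan is to adapt the standard Zorn's lemma maximal-packing construction (as indicated in the excerpt). First, apply Zorn's lemma to choose a sequence $\{a_k\} \subset \Bn$ that is maximal among families satisfying $\sigma(a_j, a_k) \geq r/2$ for all $j \neq k$. By the triangle inequality for $\sigma$, if some point $w$ lay in $B_H(a_j, r/4) \cap B_H(a_k, r/4)$ for $j \neq k$, then $\sigma(a_j, a_k) < r/2$, contradicting the maximal separation; this establishes (2). For (1), given any $z \in \Bn$, maximality forces $\sigma(z, a_k) < r/2$ for some $k$, for otherwise $\{a_k\} \cup \{z\}$ would be another admissible separated family; hence $z \in B_H(a_k, r/2) \subset B_H(a_k, r)$.

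For property (3), fix $z \in \Bn$ and let $I_z := \{k : z \in B_H(a_k, 4r)\}$, which by symmetry of $\sigma$ equals $\{k : \sigma(z, a_k) < 4r\}$. By Lemma \ref{z w inclusion} applied with radius $4r$, there exists $r_2 > 0$ depending only on $4r$ such that $B_H(a_k, r/4) \subset B_H(z, r_2)$ for every $k \in I_z$. Since the balls $B_H(a_k, r/4)$ are pairwise disjoint,
\begin{align*}
\sum_{k \in I_z} v\bigl(B_H(a_k, r/4)\bigr) \leq v\bigl(B_H(z, r_2)\bigr).
\end{align*}
When $r \leq C_0/20$, Theorem \ref{ball_size_comp} together with Lemma \ref{comp in D} gives $1 - \lvert a_k \rvert^2 \simeq 1 - \lvert z \rvert^2$ uniformly for $k \in I_z$, and Corollary \ref{cor_vol} then yields $v(B_H(a_k, r/4)) \simeq (1-\lvert z \rvert^2)^{2n+1} \simeq v(B_H(z, r_2))$, so $\lvert I_z \rvert$ is bounded by a constant $N$ independent of $z$. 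For the remaining range $r \in (C_0/20, 1]$, I would invoke the preceding covering lemma in the excerpt to cover each $B_H(a_k, 4r)$ by a bounded number of $\psi$-Hessian balls of radius $C_0/20$, thereby reducing the multiplicity count to the small-$r$ case.

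The main obstacle is ensuring that the implicit constants in Lemma \ref{z w inclusion}, Theorem \ref{ball_size_comp}, and Corollary \ref{cor_vol} remain uniform in $r$, so that $N$ can be chosen independently of $r \in (0, 1]$ as claimed. Pinning down this uniformity requires careful tracking of constants through Propositions \ref{ball_size_comp1} and \ref{ball_size_comp2} and the Sherman--Morrison-based Hessian computations, especially in the non-small regime where Corollary \ref{cor_vol} does not apply directly; once that scaling is under control, the remainder is a routine transcription of the standard maximal-packing covering scheme.
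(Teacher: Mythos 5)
Your maximal $r/2$-separated construction and the verification of (1) and (2) are exactly the standard scheme the paper has in mind (the paper gives no written proof; it cites Lemmas 2.22 and 2.23 of Zhu's book and Lemma \ref{z w inclusion}). The gap is in (3). In your small-$r$ case you need $v(B_H(z,r_2))\simeq (1-\lvert z\rvert^2)^{2n+1}$ with $r_2$ of size roughly $4r+r/4$; but Corollary \ref{cor_vol} (resting on Theorem \ref{ball_size_comp}, i.e.\ Propositions \ref{ball_size_comp1} and \ref{ball_size_comp2}) is only established for radii at most $C_0/20=1/80$, and $17r/4$ exceeds this threshold for $r$ near $C_0/20$, so the volume comparison for the enlarged ball is not available from the paper's results without shrinking the admissible $r$ further. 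More seriously, your treatment of $r\in(C_0/20,1]$ does not do what is needed: covering each $B_H(a_k,4r)$ by boundedly many balls of radius $C_0/20$ gives no bound on the \emph{number of indices} $k$ with $z\in B_H(a_k,4r)$, which is the quantity (3) asks for. Finally, the ``main obstacle'' you flag, uniformity of the implicit constants in $r$, is not the real issue: those constants depend only on $C_0$; the issue is the restricted range of radii in which the geometric estimates are proved.

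The argument the paper intends (Zhu's Lemma 2.23) bounds the multiplicity without any volume estimates and uniformly in $r\in(0,1]$: if $z\in B_H(a_k,4r)$ then $a_k\in B_H(z,4r)$ by symmetry of $\sigma$; by the unnumbered covering lemma stated just before this one, applied with the fixed data $R=4$ and $m=32$, the ball $B_H(z,4r)$ is covered by $N$ balls of radius $r/8$; and since the centers are $r/2$-separated, each ball of radius $r/8$ contains at most one $a_k$ (two points in it are at $\sigma$-distance less than $r/4$). Hence at most $N$ indices, with $N$ independent of $z$ and $r$. Your volume-comparison route can be repaired for small $r$ (restrict so that $17r/4\le C_0/20$ and use \eqref{comp in B_H} to get $1-\lvert a_k\rvert^2\simeq 1-\lvert z\rvert^2$), but the passage to all $r\le 1$ still has to go through a covering-plus-separation count of the type just described, so you should adopt that count directly.
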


We say that a sequence $\{a_k\}$ is a $H_{\psi}$-lattice with $r$ when $\{a_k\}$ is a sequence satisfying the properties in Lemma \ref{covering lem 2}. 

\subsection{Test functions}

For $z \in \Bn$, the involutive automorphisms  on $\Bn$ are defined 
\begin{align*}
 \varphi_{z}(w):=\frac{z-P_z w -\sqrt{1-\lvert z \rvert^2}Q_z w}
{1-\langle w, z \rangle}.   
\end{align*} 
It has the following property:
\begin{align}\label{prop4_phi}
  1-\lvert \varphi_{z} (w)\rvert^2 = \frac{(1-\lvert z \rvert^2)(1-\lvert w \rvert^2)}{\lvert 1-\langle w, z\rangle \rvert^2}.
\end{align}
For more details of the automorphisms of $\Bn$, see page 23 of \cite{MR601594} or page 3 of \cite{MR2115155}. 

Due to the definition of $D_{\psi}(z,r)$, we can get the following inequality which is essential for proving the first estimate of the test functions in Lemma \ref{test fn lem}. 

\begin{lem}\label{imp_ineq}
  For $z \in \Bn$ and small $r > 0$, there is a constant $C$ depending only on the radius $r$ satisfying
  \begin{align*}
    \lvert 2 \text{\rm Re} \left(\frac{1}{1- \langle w, z \rangle }\right) -\frac{1}{1-\lvert z \rvert^2} -\frac{1}{1-\lvert w \rvert^2}\rvert \le C
  \end{align*}
  for $w \in D_{\psi}(z, r)$. 
\end{lem}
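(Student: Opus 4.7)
The plan is to rewrite both $1-\langle w,z\rangle$ and $1-\lvert w \rvert^2$ as small perturbations of $a := 1-\lvert z \rvert^2$ and to exhibit an exact algebraic identity in which the singular $1/a$ terms cancel. The case $z=0$ is immediate, since $D_\psi(0,r)=\{\lvert w \rvert<r\}$ and the expression reduces to $\lvert 1-(1-\lvert w \rvert^2)^{-1}\rvert$ which is bounded for $r$ small. So assume $z\neq 0$ and decompose $w=P_z w + Q_z w$ with $P_z w = \lambda z$ for some $\lambda\in\C$. Because $Q_z w\perp z$, one has $\langle w,z\rangle = \lambda\lvert z \rvert^2$ and
\[
1-\langle w,z\rangle \;=\; a+\epsilon,\qquad \epsilon:=(1-\lambda)\lvert z \rvert^2.
\]
The first defining inequality of $D_\psi(z,r)$ then gives the crucial bound $\lvert\epsilon\rvert = \lvert z-P_z w\rvert\cdot\lvert z \rvert < ra^{3/2}$. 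Expanding $\lvert\lambda\rvert^2 = 1 + 2\,\text{\rm Re}(\lambda-1) + \lvert\lambda-1\rvert^2$ in the Pythagorean identity $\lvert w \rvert^2 = \lvert\lambda\rvert^2\lvert z \rvert^2 + \lvert Q_z w\rvert^2$ yields
\[
1-\lvert w \rvert^2 \;=\; a + 2\,\text{\rm Re}\,\epsilon - \delta,\qquad \delta:=\lvert z-P_z w\rvert^2+\lvert Q_z w\rvert^2,
\]
and both defining inequalities of $D_\psi(z,r)$ give $\delta\le 2r^2 a^2$. Lemma \ref{comp in D} also supplies $b:=1-\lvert w \rvert^2\simeq a$.

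The heart of the argument is a clean algebraic identity. Writing $2\,\text{\rm Re}(1/(a+\epsilon))=2(a+\text{\rm Re}\,\epsilon)/\lvert a+\epsilon\rvert^2$ and combining the three fractions over the common denominator $ab\,\lvert a+\epsilon\rvert^2$, a direct expansion using the relation $b-a-2\,\text{\rm Re}\,\epsilon=-\delta$ produces
\[
2\,\text{\rm Re}\!\left(\frac{1}{a+\epsilon}\right)-\frac{1}{a}-\frac{1}{b}
\;=\;\frac{-\,a^{2}\delta \;-\;(a+b)\lvert\epsilon\rvert^{2}}{ab\,\lvert a+\epsilon\rvert^{2}}.
\]
I expect this cancellation to be the main obstacle: it is not obvious a priori that the singular $1/a$ contributions vanish, but they do precisely because the first-order perturbation $2\,\text{\rm Re}\,\epsilon$ in $1-\lvert w \rvert^2$ matches the first-order term in the Taylor expansion of $2\,\text{\rm Re}(1/(a+\epsilon))$.

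The remainder is a routine estimate. For $r$ sufficiently small, $\lvert\epsilon\rvert\le ra^{3/2}\le a/2$ gives $\lvert a+\epsilon\rvert^2\simeq a^2$, and together with $b\simeq a$ the denominator is comparable to $a^{4}$. Each term of the numerator is bounded by a constant multiple of $r^{2}a^{4}$: one has $a^{2}\delta\le 2r^{2}a^{4}$ from Step 1 and $(a+b)\lvert\epsilon\rvert^{2}\lesssim a\cdot r^{2}a^{3}=r^{2}a^{4}$ from the $D_\psi$-bound on $\lvert\epsilon\rvert$. Thus the quotient is $O(r^{2})$, a constant depending only on $r$, which is the desired inequality.
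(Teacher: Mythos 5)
Your proof is correct, and it takes a genuinely different route from the paper's. The paper goes through the automorphism $\varphi_z$: using the identity \eqref{prop4_phi} it rewrites the difference exactly as $\frac{\lvert z-w\rvert^2}{\lvert 1-\langle w,z\rangle\rvert^2}-\lvert\varphi_z(w)\rvert^2\left(\frac{1}{1-\lvert z\rvert^2}+\frac{1}{1-\lvert w\rvert^2}\right)$ and then bounds the two pieces separately --- the first from above by a constant multiple of $1-\lvert\varphi_z(w)\rvert^2\le 1$, the second using the explicit formula $\lvert\varphi_z(w)\rvert^2=\frac{\lvert z-P_zw\rvert^2+(1-\lvert z\rvert^2)\lvert Q_zw\rvert^2}{\lvert 1-\langle w,z\rangle\rvert^2}$ together with the defining inequalities of $D_\psi(z,r)$ and Lemma \ref{comp in D}. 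You avoid the automorphism entirely: setting $1-\langle w,z\rangle=a+\epsilon$ with $\epsilon=(1-\lambda)\lvert z\rvert^2$ and $1-\lvert w\rvert^2=a+2\,\text{Re}\,\epsilon-\delta$ with $\delta=\lvert z-P_zw\rvert^2+\lvert Q_zw\rvert^2$, you reduce everything to the one-fraction identity with numerator $-a^2\delta-(a+b)\lvert\epsilon\rvert^2$; I checked the algebra and the first-order cancellation is exactly as you claim, and the $D_\psi$-bounds $\lvert\epsilon\rvert<ra^{3/2}$, $\delta\le 2r^2a^2$ together with Lemma \ref{comp in D} (and $\lvert a+\epsilon\rvert\simeq a$ for small $r$) finish the estimate. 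The two arguments use the same geometric inputs (the two defining inequalities of $D_\psi(z,r)$ and the comparability $1-\lvert z\rvert^2\simeq 1-\lvert w\rvert^2$), but your identity is more elementary and yields a bit more than the lemma asks: the quantity is nonpositive and of size $O(r^2)$, not merely bounded by a constant depending on $r$. What the paper's route buys instead is the explicit link to the M\"obius-invariant quantity $1-\lvert\varphi_z(w)\rvert^2$, which is the form of the estimate that is convenient to quote elsewhere. One small point of care, which you did handle: the case $z=0$ must be treated separately since $P_0=0$, and your reduction there is fine.
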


\begin{proof}
  Using \eqref{prop4_phi}, we get the reformulation:
  \begin{align}\label{eq_difference}
    2\re &\left(\frac{1}{1- \langle w, z \rangle }\right) -\frac{1}{1-\lvert z \rvert^2} -\frac{1}{1-\lvert w \rvert^2} \nonumber\\
    & = \frac{\lvert z-w\rvert^2}{\lvert 1- \langle w ,z\rangle \rvert^2} - \lvert \varphi_z (w)\rvert^2 \left(\frac{1}{1-\lvert z \rvert^2} + \frac{1}{1-\lvert w \rvert^2}\right),
  \end{align}
  which indicates 
    \begin{align}\label{Eq}
    - \lvert \varphi_z (w)\rvert^2 \left( \frac{1}{1-\lvert z \rvert^2} + \frac{1}{1-\lvert w \rvert^2}\right) \le \text{LHS of \eqref{eq_difference}} \le \frac{\lvert z-w\rvert^2}{\lvert 1- \langle w ,z\rangle \rvert^2}.
  \end{align}

  First, we show that $\frac{\lvert z-w\rvert^2}{\lvert 1- \langle w ,z\rangle \rvert^2}$ is dominated with some constant independent of $z$ and $w$, 
which means the LHS of \eqref{eq_difference} has an upper bound $C_r$. 
  For $w\in D_{\psi}(z,r)$, we have $\lvert z-w\rvert^2 < 4r^2(1-\lvert z \rvert^2)^2$ since $\lvert z-w\rvert \le \lvert z-P_z (w)\rvert+\lvert Q_z w\rvert$.

By Lemma \ref{comp in D}, we have $1-\lvert z \rvert^2 \simeq 1-\lvert w \rvert^2$ for $w \in D_{\psi}(z,r)$ for small $r>0$.
Hence there exists $C_r>0$ such that
  \begin{align}\label{upper constant 1}
    \frac{\lvert z-w\rvert^2}{\lvert 1- \langle w ,z\rangle \rvert^2} < \frac{4 r^2(1-\lvert z \rvert^2)^2}{\lvert 1- \langle w ,z\rangle \rvert^2} \le C_r \frac{(1-\lvert z \rvert^2)(1-\lvert w \rvert^2)}{\lvert 1- \langle w ,z\rangle \rvert^2}.
  \end{align}
  The RHS of \eqref{upper constant 1} is equal to $$C_r \left(1-\lvert \varphi_{z} (w)\rvert^2\right)$$ by \eqref{prop4_phi}. It is dominated by $C_r$ since $\varphi_{z} (w)$ belongs to $\Bn$. 

  Next, we show that the LHS of \eqref{Eq} has a lower bound $- C_r'$. Since $z-P_z (w)$ and $Q_z (w)$ are perpendicular, we have $$ \lvert \varphi_z(w)\rvert^2 = \frac{\lvert z-P_z (w)\rvert^2 + (1-\lvert z \rvert^2) \lvert Q_z (w)\rvert^2}{\lvert 1- \langle w ,z\rangle \rvert^2}. $$
  The definition of $D_{\psi}(z, r)$ yields
  $$\lvert \varphi_z (w)\rvert^2 < \frac{2r^2(1-\lvert z \rvert^2)^3}{\lvert 1- \langle w ,z\rangle \rvert^2}$$ 
  for $w \in D_{\psi}(z, r)$.
  It implies 
  \begin{align*}
    \lvert \varphi_z (w)\rvert^2 \left( \frac{1}{1-\lvert z \rvert^2} + \frac{1}{1-\lvert w \rvert^2}\right)
    &< \frac{2r^2(1-\lvert z \rvert^2)^3}{\lvert 1- \langle w ,z\rangle \rvert^2}\left( \frac{1}{1-\lvert z \rvert^2} + \frac{1}{1-\lvert w \rvert^2}\right)\\
    &\le C_r' \frac{(1-\lvert z \rvert^2)(1-\lvert w \rvert^2)}{\lvert 1- \langle w ,z\rangle \rvert^2} \le C_r'
  \end{align*}
  with Lemma \ref{comp in D}.
  The proof is done by getting $C=\max \left\{C_r , C_r'\right\}$.
\end{proof}

By the previous inequality, we can get the following lemma for test functions. It will be used for proving the weighted sub-mean-value property and the results in Section 3 and Section 4.  

\begin{lem}\label{test fn lem}
  For $z \in \Bn$, let $\Phi_{p,z} (w) := e^{\frac{2}{p} \frac{1}{1-\langle w,z\rangle}-\frac{1}{p}\frac{1}{1-\lvert z \rvert^2}}$. The holomorphic function $\Phi_{p,z}$ has following properties:
  \begin{align}\label{test fn_1}
    \lvert \Phi_{p,z} (w)\rvert^p e^{-\frac{1}{1-\lvert w \rvert^2}} \simeq 1 \quad \text{when} \quad w \in D_{\psi}(z, r) 
  \end{align}
  and
  \begin{align}\label{test fn_2}
    \|\Phi_{p,z}\|_{p,\psi}^p \simeq (1-\lvert z \rvert^2)^{2n+1}. 
  \end{align}
\end{lem}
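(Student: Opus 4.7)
The first estimate \eqref{test fn_1} is essentially a repackaging of Lemma \ref{imp_ineq}: writing $\lvert \Phi_{p,z}(w)\rvert^p e^{-\psi(w)} = \exp\bigl(2\,\mathrm{Re}\tfrac{1}{1-\langle w,z\rangle}-\tfrac{1}{1-\lvert z\rvert^2}-\tfrac{1}{1-\lvert w\rvert^2}\bigr)$, the exponent is bounded by a constant on $D_{\psi}(z,r)$ by Lemma \ref{imp_ineq}, so the whole expression is $\simeq 1$ there. This will be my first step. For the lower bound half of \eqref{test fn_2} I then restrict the defining integral of $\|\Phi_{p,z}\|_{p,\psi}^p$ to $D_{\psi}(z,r)$, apply \eqref{test fn_1}, and invoke $v(D_{\psi}(z,r))\simeq (1-\lvert z\rvert^2)^{2n+1}$ from Corollary \ref{cor_vol}.

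The upper bound in \eqref{test fn_2} will be the main obstacle, since Lemma \ref{imp_ineq} controls the integrand only on $D_{\psi}(z,r)$ while the integration runs over all of $\Bn$. My plan is to make the involutive change of variables $w=\varphi_z(u)$, whose real Jacobian is $(1-\lvert z\rvert^2)^{n+1}\lvert 1-\langle u,z\rangle\rvert^{-2(n+1)}$, together with the standard identities $1-\langle \varphi_z(u),z\rangle=(1-\lvert z\rvert^2)/(1-\langle u,z\rangle)$ and $1-\lvert \varphi_z(u)\rvert^2 = (1-\lvert z\rvert^2)(1-\lvert u\rvert^2)/\lvert 1-\langle u,z\rangle\rvert^2$. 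A direct algebraic simplification (adding and subtracting $\lvert u\rvert^4$ in the numerator) then rewrites the transformed exponent as
\[
-\frac{\lvert u\rvert^2(1-\lvert u\rvert^2)+\bigl(\lvert u\rvert^2-\mathrm{Re}\langle u,z\rangle\bigr)^2+\bigl(\mathrm{Im}\langle u,z\rangle\bigr)^2}{(1-\lvert z\rvert^2)(1-\lvert u\rvert^2)}\le -\frac{\lvert u\rvert^2}{1-\lvert z\rvert^2},
\]
so that $\|\Phi_{p,z}\|_{p,\psi}^p \le (1-\lvert z\rvert^2)^{n+1}\int_{\Bn} e^{-\lvert u\rvert^2/(1-\lvert z\rvert^2)}\,\lvert 1-\langle u,z\rangle\rvert^{-2(n+1)}\,dv(u)$.

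To conclude I will split this $u$-integral at $\lvert u\rvert=1/2$. On $\{\lvert u\rvert\le 1/2\}$ the denominator satisfies $\lvert 1-\langle u,z\rangle\rvert\ge 1/2$, and the rescaling $u=\sqrt{1-\lvert z\rvert^2}\,v$ reduces the Gaussian piece to the full $\mathbb{C}^n$ Gaussian integral times $(1-\lvert z\rvert^2)^n$, giving a contribution of order $(1-\lvert z\rvert^2)^{2n+1}$ after the prefactor. On $\{\lvert u\rvert\ge 1/2\}$ the Gaussian is at most $e^{-1/(4(1-\lvert z\rvert^2))}$, and the classical ball integral $\int_{\Bn}\lvert 1-\langle u,z\rangle\rvert^{-2(n+1)}dv(u)\simeq (1-\lvert z\rvert^2)^{-(n+1)}$ makes the remaining piece $O(e^{-1/(4(1-\lvert z\rvert^2))})$, which is uniformly $O((1-\lvert z\rvert^2)^{2n+1})$ on $\Bn$ since exponential decay beats any polynomial. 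The decisive step is the algebraic simplification that turns the exponent into a manifestly nonpositive quadratic form in $u$; once that identity is on the table, the two-piece estimate is routine.
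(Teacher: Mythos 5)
Your proposal is correct, and for the norm estimate \eqref{test fn_2} it takes a genuinely different route from the paper. For \eqref{test fn_1} you do exactly what the paper does: exponentiate Lemma \ref{imp_ineq}. For \eqref{test fn_2}, however, the paper disposes of the estimate with a one-line citation to Lemma 3 of Dostani\'c \cite{MR2371433}, whereas you give a self-contained computation: the lower bound by restricting the integral to $D_{\psi}(z,r)$, applying \eqref{test fn_1}, and using Corollary \ref{cor_vol} (no circularity, since that corollary rests only on the geometric comparison of $B_H$ and $D_\psi$); the upper bound by the substitution $w=\varphi_z(u)$ with Jacobian $(1-\lvert z\rvert^2)^{n+1}\lvert 1-\langle u,z\rangle\rvert^{-2(n+1)}$. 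I checked your key identity: writing $a=\mathrm{Re}\langle u,z\rangle$, $b=\mathrm{Im}\langle u,z\rangle$, the transformed exponent equals
\begin{align*}
-\frac{\lvert u\rvert^2(1-\lvert u\rvert^2)+(\lvert u\rvert^2-a)^2+b^2}{(1-\lvert z\rvert^2)(1-\lvert u\rvert^2)}\le-\frac{\lvert u\rvert^2}{1-\lvert z\rvert^2},
\end{align*}
and the two-piece estimate is sound: on $\lvert u\rvert\le\tfrac12$ the Gaussian rescaling gives $(1-\lvert z\rvert^2)^{n}$, hence $(1-\lvert z\rvert^2)^{2n+1}$ after the prefactor, and on $\lvert u\rvert\ge\tfrac12$ the Forelli--Rudin integral $\int_{\Bn}\lvert 1-\langle u,z\rangle\rvert^{-2(n+1)}\diff v(u)\simeq(1-\lvert z\rvert^2)^{-(n+1)}$ combined with $e^{-1/(4(1-\lvert z\rvert^2))}=O\bigl((1-\lvert z\rvert^2)^{2n+1}\bigr)$ closes the bound uniformly in $z$. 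What the paper's route buys is brevity; what yours buys is independence from the external reference and, as a byproduct of the manifest nonpositivity of the exponent, the global pointwise bound $\lvert\Phi_{p,z}(w)\rvert^pe^{-\psi(w)}\le 1$ on all of $\Bn$, which strengthens the upper half of \eqref{test fn_1} beyond $D_\psi(z,r)$.
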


\begin{proof}
  By Lemma \ref{imp_ineq}, we can show that for $z \in \Bn$ and small $r > 0$, there is a constant $C$ depending only $r$ satisfying
  \begin{align*}
    {C}^{-1} e^{- \frac{1}{1-\lvert z \rvert^2} - \frac{1}{1-\lvert w \rvert^2}} \le \lvert  e^{-\frac{1}{1- \langle w, z \rangle }}\rvert^{2} \le C e^{- \frac{1}{1-\lvert z \rvert^2} - \frac{1}{1-\lvert w \rvert^2}}
  \end{align*}
  for $w \in D_{\psi}(z, r)$. It gives \eqref{test fn_1}.
  By Lemma 3 in \cite{MR2371433}, \eqref{test fn_2} is obtained.
\end{proof}

\subsection{Mean value inequality with exponential weight}

\begin{lem}\label{SMVP}
  Let $f$ be a holomorphic function on $\Bn$ and $s \in {\mathbb R}$. 
  For $z \in \Bn$ and a sufficiently small radius $r>0$, there is a constant $C$ depending on $s$ and $r$ satisfying 
  \begin{align*}
  \lvert f(z)\rvert^p e^{-\frac{s}{1-\lvert z \rvert^2}} \le \frac{C}{(1-\lvert z \rvert^2)^{2n+1}} \int_{B_{H}(z,r)} \lvert f(\zeta)\rvert^p e^{-\frac{s}{1-\lvert \zeta\rvert^2}} \diff v(\zeta).  
  \end{align*}
\end{lem}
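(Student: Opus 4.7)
The plan is to introduce an auxiliary holomorphic multiplier that converts the weight $e^{-s\psi}$ into something whose modulus is tractable on a polydisc inscribed in the $\psi$-Hessian ball, and then apply the standard polydisc sub-mean-value inequality for plurisubharmonic functions. The key point is that the complex Hessian has two distinct eigenvalue scales (one of order $(1-|z|^2)^{-3}$, one of order $(1-|z|^2)^{-2}$), so the inscribed polydisc must be anisotropic; this anisotropy is precisely what yields the sharp exponent $2n+1$ in the denominator.

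Concretely, I will set
$$G(w):=f(w)\exp\!\left(-\frac{2s}{p(1-\langle w,z\rangle)}\right),$$
which is holomorphic on $\Bn$ since $1-\langle w,z\rangle$ never vanishes there, and therefore $|G|^p$ is plurisubharmonic. A direct evaluation gives $|G(z)|^p=|f(z)|^p e^{-s\psi(z)}$, while Lemma~\ref{imp_ineq} applied on $D_\psi(z,r')$ for sufficiently small $r'$ yields
$$|G(\zeta)|^p=|f(\zeta)|^p\exp\!\left(-2s\,\re\frac{1}{1-\langle\zeta,z\rangle}\right)\simeq|f(\zeta)|^p e^{-s\psi(z)}e^{-s\psi(\zeta)}.$$

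Next, I will inscribe a polydisc inside $B_H(z,r)$. By Theorem~\ref{ball_size_comp} it suffices to inscribe in $D_\psi(z,r/\delta)$. If $z=0$, use a Euclidean ball directly; otherwise apply a unitary change of coordinates sending $z$ to $(|z|,0,\ldots,0)$, under which $D_\psi(z,r/\delta)$ takes the form $\{\tilde w:|\tilde w_1-|z||<\tfrac{r}{\delta}(1-|z|^2)^{3/2},\ |\tilde w'|<\tfrac{r}{\delta}(1-|z|^2)\}$. The anisotropic polydisc $P$ with first radius $\tfrac{r}{\delta}(1-|z|^2)^{3/2}$ and remaining radii $\tfrac{r}{\delta\sqrt{n-1}}(1-|z|^2)$ lies inside this region and has volume $v(P)\simeq(1-|z|^2)^{2n+1}$, consistent with Corollary~\ref{cor_vol}.

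Applying the iterated polydisc sub-mean-value inequality to $|G|^p$ on $P$, substituting the bounds above for $|G(z)|^p$ and $|G(\zeta)|^p$, canceling the common factor $e^{-s\psi(z)}$, and enlarging the integration domain from $P$ to $B_H(z,r)$ delivers the claim. The main obstacle is the anisotropic polydisc construction: any Euclidean ball contained in $B_H(z,r)$ has radius at most of order $(1-|z|^2)^{3/2}$, and hence volume only $\sim(1-|z|^2)^{3n}$, which would produce the weaker estimate underlying \eqref{Laplacian} instead of \eqref{Hessian}. Exploiting the true Hessian geometry via $P$ is what recovers the sharp $(1-|z|^2)^{2n+1}$ factor.
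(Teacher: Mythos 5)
Your argument is essentially the paper's own proof: the paper multiplies $f$ by $\Phi_{p,z}^{-s}$ (your $G$ up to the constant factor $e^{\frac{s}{p}\psi(z)}$), applies the plurisubharmonic sub-mean-value inequality over the anisotropic region $D_{\psi}(z,\delta^{-1}r)$ (playing the role of your inscribed polydisc, with the same volume scale $(1-\lvert z\rvert^2)^{2n+1}$), uses Lemma \ref{imp_ineq} through \eqref{test fn_1} to compare the modulus of the multiplier with $e^{-s\psi(\zeta)}$, and concludes with Theorem \ref{ball_size_comp} and Corollary \ref{cor_vol}. The only slip is the evaluation $\lvert G(z)\rvert^p=\lvert f(z)\rvert^p e^{-2s\psi(z)}$, not $\lvert f(z)\rvert^p e^{-s\psi(z)}$ — which is precisely why your final cancellation of $e^{-s\psi(z)}$ is needed (the paper's normalized $\Phi_{p,z}^{-s}$ gives the weight $e^{-s\psi(z)}$ at $z$ directly), and with that correction your steps yield the lemma.
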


\begin{proof}
  Since the function $\Phi_{p,z}$ is non-vanishing, $\Phi_{p,z}(\zeta)^{-s}$ with a principle branch is holomorphic. 
  Subharmonicity of $\lvert f(\zeta)\Phi_{p,z}(\zeta)^{-s} \rvert^p$ gives that for $\delta > 1$,
  \begin{align*}
    \lvert f(z)\Phi_{p,z}(z)^{-s}\rvert^p 
    &\leq \frac{C}{v(D_{\psi}(z,\delta^{-1}r))} \int_{D_{\psi}(z,\delta^{-1}r)} \lvert f(\zeta)\rvert^p \lvert \Phi_{p,z}(\zeta)^{-p}\rvert^{s} \diff v(\zeta)\\
    &\le \frac{C}{v(D_{\psi}(z,\delta^{-1}r))} \int_{D_{\psi}(z,\delta^{-1}r)} \lvert f(\zeta)\rvert^p e^{-\frac{s}{1-\lvert \zeta\rvert^2}} \diff v(\zeta)
  \end{align*}
  with aid of \eqref{test fn_1}.
    We note that  $\lvert f(z)\rvert^p e^{-\frac{s}{1-\lvert z \rvert^2}} = \lvert f(z)\Phi_{p,z}(z)^{-s}\rvert^p$.
  Theorem \ref{ball_size_comp} and Corollary \ref{cor_vol} yield
  \begin{align*}
    \lvert f(z)\rvert^p e^{-\frac{s}{1-\lvert z \rvert^2}} 
    &\le \frac{C}{(1-\lvert z \rvert^2)^{2n+1}} \int_{B_{H}(z,r)} \lvert f(\zeta)\rvert^p e^{-\frac{s}{1-\lvert \zeta\rvert^2}} \diff v(\zeta).
  \end{align*}
\end{proof}

\begin{rmk}
When $n>1$, using the $\psi$-Hessian ball is suitable for investigating exponential type weighted Bergman spaces on the unit ball rather than using the ball with radius function $\left(\Delta \psi\right)^{-\frac{1}{2}}$. 
However, when $n=1$, it follows that
$H_\psi (z) = \Delta \psi (z) \simeq (1-\lvert z \rvert^2)^{-3}$.
Moreover, note that $P_z = I$ for $z\in{\mathbb D}-\{0\}$, 
and $Q_z \equiv 0$.
Hence $D_{\psi}(z,r)$, the $\psi$-Hessian ball $B_H(z,r)$, and
the ball with radius function $\left(\Delta \psi\right)^{-\frac{1}{2}}$ are all comparable when $n=1$.
\end{rmk}

\section{Carleson embedding theorem}\label{sec3}

\begin{definition}
  For a Borel measure $\diff \mu$, we call the measure $\diff \mu$ is a Carleson measure for $A^p_{\psi}(\Bn)$ if there is a constant $C>0$ satisfying
    \begin{align*}
      \int_{\Bn} \lvert  f(z) \rvert^p e^{-\psi(z)} \diff\mu(z) \le C \int_{\Bn} \lvert  f(z) \rvert^p e^{-\psi(z)} \diff v(z).
    \end{align*}
\end{definition}

We denote
\begin{align*}
  \|f\|_{p,\mu} := \left\{\int_{\Bn} \lvert  f(z) \rvert^p e^{-\psi(z)} \diff\mu(z)\right\}^{\frac{1}{p}}
\end{align*}
as the norm of $f$ which belong to $L^p(\Bn, e^{- \psi} \diff \mu )$.

Let $p \ge 1$. For $z \in \Bn$, let $$\widetilde{\Phi}_{p,z} (w) := \frac{\Phi_{p,z}(w)}{\|\Phi_{p,z}\|_{p,\psi}}$$ be the normalized test function in Lemma \ref{test fn lem}.
Then $\widetilde{\Phi}_{p,z} \in A^p_{\psi}(\Bn)$ and 
\begin{align}\label{normalized test function}
  \widetilde{\Phi}_{p,z}(w) \simeq \frac{\Phi_{p,z} (w)}{(1-\lvert z \rvert^2)^{\frac{2n+1}{p}}} = \frac{e^{\frac{2}{p}\frac{1}{1-\langle w,z\rangle} -\frac{1}{p}\frac{1}{1-\lvert z \rvert^2}}}{(1-\lvert z \rvert^2)^{\frac{2n+1}{p}}}
\end{align}
by \eqref{test fn_2}.
We note that $\widetilde{\Phi}_{p,z}(w)$ converges to $0$ uniformly on compact subsets of $\Bn$ as $\lvert z \rvert \rightarrow 1^{-}$.

For a finite positive Borel measure $\mu$, we define a function ${\widehat \mu}_p$ with the normalized test functions in $A^p_{\psi}(\Bn)$; 
\begin{align*}
  {\widehat \mu}_p (z) := \int_{\Bn} \lvert\widetilde{\Phi}_{p,z}(w)\rvert^p e^{-\psi(w)}\diff \mu(w) = \frac{1}{\|\Phi_{p,z}\|_{p,\psi}^p} \int_{\Bn} \lvert\Phi_{p,z}(w)\rvert^p e^{-\psi(w)}\diff \mu(w).
\end{align*}

\begin{theorem}\label{Carl measure char}
  Let $\diff\mu$ be a positive Borel measure. The following statements are equivalent:
  \begin{enumerate}
    \item[(a)] The measure $\diff\mu$ is a Carleson measure for $A^p_{\psi}(\Bn)$;
    \item[(b)] ${\widehat \mu}_p$ is a bounded function on $\Bn$;
    \item[(c)] For $z \in \Bn$ and sufficiently small $r>0$, there is a constant $C>0$ satisfying
    \begin{align*}
      \mu(B_{H}(z,r)) \le C v(B_{H}(z,r))\textit{;}
    \end{align*}
    \item[(d)] For any $H_\psi$-lattice $\{a_k\}$, there is a constant $C>0$ satisfying
    \begin{align*}
      \mu(B_{H}(a_k,r)) \le C v\!\left(B_{H}(a_k,r)\right).
    \end{align*}
  \end{enumerate}
\end{theorem}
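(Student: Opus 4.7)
My plan is to prove the cycle (a) $\Rightarrow$ (b) $\Rightarrow$ (c) $\Rightarrow$ (d) $\Rightarrow$ (a), with (d) $\Rightarrow$ (a) being the main technical step. The two easy implications fall out immediately: for (a) $\Rightarrow$ (b), substitute the normalized test function $\widetilde{\Phi}_{p,z}$ (which by \eqref{test fn_2} satisfies $\|\widetilde{\Phi}_{p,z}\|_{p,\psi} \simeq 1$) into the Carleson inequality to obtain $\widehat{\mu}_p(z) \le C$ uniformly in $z$; and (c) $\Rightarrow$ (d) is just the specialization of (c) to the lattice points $z = a_k$.

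For (b) $\Rightarrow$ (c), I would exploit the pointwise lower bound $\lvert \Phi_{p,z}(w)\rvert^p e^{-\psi(w)} \gtrsim 1$ on $D_{\psi}(z, r)$ from \eqref{test fn_1}. Restricting the integral in the definition of $\widehat{\mu}_p(z)$ to $D_{\psi}(z, r)$ and dividing by $\|\Phi_{p,z}\|_{p,\psi}^p \simeq (1-\lvert z \rvert^2)^{2n+1}$ yields
\begin{align*}
\widehat{\mu}_p(z) \gtrsim \frac{\mu(D_{\psi}(z, r))}{(1-\lvert z \rvert^2)^{2n+1}}.
\end{align*}
Theorem \ref{ball_size_comp} provides $B_{H}(z, r') \subset D_{\psi}(z, r)$ for some $r' > 0$, and Corollary \ref{cor_vol} gives $(1-\lvert z \rvert^2)^{2n+1} \simeq v(B_{H}(z, r'))$. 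Hence boundedness of $\widehat{\mu}_p$ yields $\mu(B_{H}(z, r')) \lesssim v(B_{H}(z, r'))$, which is condition (c); any smaller radius follows by monotonicity of $\mu$ together with the fact that volumes of small Hessian balls at $z$ are all comparable to $(1-\lvert z \rvert^2)^{2n+1}$.

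The hard step is (d) $\Rightarrow$ (a). Fix $f \in A^p_{\psi}(\Bn)$ and an $H_\psi$-lattice $\{a_k\}$ with sufficiently small parameter $r$ from Lemma \ref{covering lem 2}. For each $z \in B_{H}(a_k, r)$, I would combine the sub-mean value property (Lemma \ref{SMVP}) with the radius comparability $1-\lvert z \rvert^2 \simeq 1-\lvert a_k \rvert^2$ (which follows via Theorem \ref{ball_size_comp} and Lemma \ref{comp in D}) and the triangle-inequality inclusion $B_{H}(z, r) \subset B_{H}(a_k, 2r)$, to obtain
\begin{align*}
\lvert f(z)\rvert^p e^{-\psi(z)} \lesssim \frac{1}{(1-\lvert a_k\rvert^2)^{2n+1}} \int_{B_{H}(a_k, 2r)} \lvert f(\zeta)\rvert^p e^{-\psi(\zeta)} \diff v(\zeta).
\end{align*}
Integrating this estimate against $\diff \mu$ over $B_{H}(a_k, r)$, invoking (d) and Corollary \ref{cor_vol} to absorb $\mu(B_{H}(a_k, r)) \lesssim (1-\lvert a_k\rvert^2)^{2n+1}$, and then summing over $k$ using property (3) of Lemma \ref{covering lem 2} (bounded overlap of $\{B_{H}(a_k, 4r)\}$) produces
\begin{align*}
\int_{\Bn} \lvert f\rvert^p e^{-\psi} \diff \mu \lesssim \sum_k \int_{B_{H}(a_k, 2r)} \lvert f\rvert^p e^{-\psi} \diff v \lesssim \int_{\Bn} \lvert f\rvert^p e^{-\psi} \diff v,
\end{align*}
which is (a). The delicate part is coordinating the three radii—the sub-mean value radius, the lattice covering radius, and the bounded overlap radius—so that all three lemmas apply simultaneously; this forces the initial choice of $r$ to be small enough relative to the constant $C_0/20$ from Theorem \ref{ball_size_comp}.
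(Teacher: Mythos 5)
Your proposal is correct and follows essentially the same route as the paper: test functions for (a)$\Rightarrow$(b), the estimates \eqref{test fn_1}--\eqref{test fn_2} together with Theorem \ref{ball_size_comp} and Corollary \ref{cor_vol} for (b)$\Rightarrow$(c), the trivial specialization for (c)$\Rightarrow$(d), and the covering lemma combined with the weighted sub-mean-value property (Lemma \ref{SMVP}) for (d)$\Rightarrow$(a). The only cosmetic differences are that you phrase (b)$\Rightarrow$(c) through $D_{\psi}(z,r)$ rather than directly through $B_{H}(z,r)$, which is equivalent by Theorem \ref{ball_size_comp}.
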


\begin{proof}
  Suppose $\diff\mu$ is a Carleson measure for $A^p_{\psi}(\Bn)$. There is $C>0$ satisfying 
  \begin{align*}
    \int_{\Bn} \lvert \Phi_{p,z}(w)\rvert^p e^{-\psi(w)} \diff\mu(w) 
    \le C\int_{\Bn} \lvert \Phi_{p,z}(w)\rvert^p e^{-\psi(w)} \diff v(w)
    = C\|\Phi_{p,z}\|_{p,\psi}^p
  \end{align*}  
  for $z \in \Bn$. It gives (a) implies (b).

  Using Theorem \ref{ball_size_comp} and Lemma \ref{test fn lem}, we have
  \begin{align*}
    \mu(B_{H}(z,r)) = \int_{B_{H}(z,r)} \diff\mu(w) 
    \simeq \int_{B_{H}(z,r)} \lvert \Phi_{p,z}(w)\rvert^p e^{-\psi(w)} \diff\mu(w).
  \end{align*}
  Corollary \ref{cor_vol} and \eqref{test fn_2} give 
  \begin{align*}
    v(B_{H}(z,r)) \simeq (1-\lvert z \rvert^2)^{2n+1} \simeq \|\Phi_{p,z}\|_{p,\psi}^p.
  \end{align*}
  Therefore,
  \begin{align}\label{char_Carl_b->c}
    \frac{\mu(B_{H}(z,r))}{v(B_{H}(z,r))} \lesssim {\widehat \mu}_p (z)
  \end{align}
  which shows (b) implies (c).

  The implication (c) $\Rightarrow$ (d) is trivial.

  Suppose (d) holds. By Lemma \ref{covering lem 2}, we have
  \begin{align}\label{1st for CMC}
    \int_{\Bn} \lvert f(z)\rvert^p e^{-\psi(z)} \diff\mu(z) \le \sum_{k=1}^{\infty} \int_{B_H(a_k, r)} \lvert f(z)\rvert^p e^{-\psi(z)} \diff\mu(z).
  \end{align}
  Since Lemma \ref{SMVP} asserts
  \begin{align*}
    \sup_{z\in B_H(a_k, r)} \lvert f(z)\rvert^p e^{-\psi(z)} \lesssim \frac{1}{v\!\left(B_{H}(a_k,r)\right)} \int_{B_H(a_k, 2r)} \lvert f(w)\rvert^p e^{-\psi(w)} \diff v(w), 
  \end{align*}
  it is obtained that
  \begin{align*}
    \text{LHS of } \eqref{1st for CMC} &\lesssim \sum_{k=1}^{\infty} \int_{B_H(a_k, r)} \frac{1}{v\!\left(B_{H}(a_k,r)\right)} \int_{B_H(a_k, 2r)} \lvert f(w)\rvert^p e^{-\psi(w)} \diff v(w) \diff\mu(z)\\
    &= \sum_{k=1}^{\infty} \frac{1}{v\!\left(B_{H}(a_k,r)\right)} \int_{B_H(a_k, r)} \diff\mu(z) \int_{B_H(a_k, 2r)} \lvert f(w)\rvert^p e^{-\psi(w)} \diff v(w)\\
    &= \sum_{k=1}^{\infty} \frac{\mu(B_H(a_k, r))}{v\!\left(B_{H}(a_k,r)\right)} \int_{B_H(a_k, 2r)} \lvert f(w)\rvert^p e^{-\psi(w)} \diff v(w).
  \end{align*}
  By the hypothesis, there is $C>0$ such that
  \begin{align*}
    \text{LHS of } \eqref{1st for CMC} 
    &\leq C\sum_{k=1}^{\infty} \int_{B_H(a_k, 2r)} \lvert f(w)\rvert^p e^{-\psi(w)} \diff v(w).
  \end{align*}
  Lemma \ref{covering lem 2} yields 
  \begin{align*}
    \int_{\Bn} \lvert f(z)\rvert^p e^{-\psi(z)} \diff\mu(z) \leq CN\int_{\Bn} \lvert f(z)\rvert^p e^{-\psi(z)} \diff v(z)
  \end{align*}
  which implies (a).
\end{proof}
\begin{corollary}
  For $p \ge 1$ and a positive Borel measure $\diff \mu$, the following quantities are equivalent:
  \begin{enumerate}
    \item [(a)] $\|i_p\|^p$ where $\|i_p\| = \sup \left\{\|f\|_{p, \mu} ; \|f\|_{p, \psi} = 1\right\}$;
    \item [(b)] $\|{\widehat \mu}_p\|_{\infty}$;
    \item [(c)] For small $r>0$, $\sup_{z \in \Bn} \frac{\mu(B_{H}(z,r))}{v \left(B_{H}(z,r)\right)}$;
    \item [(d)] For any $H_\psi$-lattice $\{a_k\}$, $\sup_{k=1,2,\dots} \frac{\mu(B_{H}(a_k,r))}{v \left(B_{H}(a_k,r)\right)}$.
  \end{enumerate}
\end{corollary}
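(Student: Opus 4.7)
The plan is to revisit the proof of Theorem \ref{Carl measure char} and quantify each of the four implications, since each one was in fact established with an explicit comparison constant. The same chain of implications then yields the equivalence of the four quantities, rather than just their finiteness.

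First, I would verify $(a) \Rightarrow (b)$: plugging the test function $f=\Phi_{p,z}$ into the Carleson inequality directly gives
\[
\widehat{\mu}_p(z) = \frac{1}{\|\Phi_{p,z}\|_{p,\psi}^p}\int_{\Bn}|\Phi_{p,z}(w)|^p e^{-\psi(w)}\,\diff\mu(w) \le \|i_p\|^p,
\]
so $\|\widehat{\mu}_p\|_\infty \le \|i_p\|^p$. Next, for $(b) \Rightarrow (c)$, I would reuse estimate \eqref{char_Carl_b->c}, which was obtained by bounding $\mu(B_H(z,r))$ from below by the integral of $|\Phi_{p,z}|^p e^{-\psi}$ over $B_H(z,r)$ via Lemma \ref{test fn lem}, and combining with $v(B_H(z,r))\simeq \|\Phi_{p,z}\|_{p,\psi}^p$ from Corollary \ref{cor_vol} and \eqref{test fn_2}. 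This shows
\[
\sup_{z\in\Bn}\frac{\mu(B_H(z,r))}{v(B_H(z,r))} \lesssim \|\widehat{\mu}_p\|_\infty.
\]
The implication $(c) \Rightarrow (d)$ is immediate by restricting the supremum to the lattice points.

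Finally, for $(d) \Rightarrow (a)$, I would carry through exactly the covering argument in the proof of Theorem \ref{Carl measure char}: using Lemma \ref{covering lem 2} to cover $\Bn$ by balls $B_H(a_k, r)$, then the sub-mean-value property from Lemma \ref{SMVP} on each such ball, and finally the bounded overlap property of the dilated balls $B_H(a_k, 2r)$. This yields
\[
\int_{\Bn}|f(z)|^p e^{-\psi(z)}\,\diff\mu(z) \le C\,N \sup_{k}\frac{\mu(B_H(a_k, r))}{v(B_H(a_k, r))}\int_{\Bn}|f(z)|^p e^{-\psi(z)}\,\diff v(z),
\]
so
\[
\|i_p\|^p \lesssim \sup_{k}\frac{\mu(B_H(a_k, r))}{v(B_H(a_k, r))}.
\]

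Chaining the four comparisons closes the loop and proves all four quantities are comparable. No step should present a genuine obstacle, since all ingredients are already established in Theorem \ref{Carl measure char} and its proof; the only care needed is to make sure each intermediate comparison constant is independent of the measure $\mu$ and the function $f$, which is transparent from the quoted inequalities. The mildest subtlety is checking that the comparisons in $(b)\Rightarrow(c)$ and $(d)\Rightarrow(a)$ use constants depending only on $n$, $p$ and $r$ (through the lattice constant $N$ and the uniform bounds in Lemma \ref{test fn lem}), which they do.
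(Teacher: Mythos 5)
Your proposal is correct and is exactly the intended argument: the paper states this corollary without a separate proof precisely because it follows by tracking the constants in each implication of Theorem \ref{Carl measure char}, which is what you do. Each of your four quantified comparisons matches the corresponding step in that proof, with constants depending only on $n$, $p$, $r$ and the lattice constant $N$, so nothing further is needed.
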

\begin{definition}
  For a Borel measure $\diff \mu$, we call the measure $\diff \mu$ is a vanishing Carleson measure for $A^p_{\psi}(\Bn)$ if 
  \begin{align*}
    \lim_{j \rightarrow \infty} \int_{\Bn} \lvert  f_j(z) \rvert^p e^{-\psi(z)} \diff\mu(z) \rightarrow 0,
  \end{align*}
  whenever $\{f_j\}$ is a bounded sequence in $A^p_{\psi}(\Bn)$ which converges to $0$ uniformly on compact subsets.
\end{definition}

\begin{theorem}\label{van_Carl measure char}
  Let $\diff\mu$ be a positive Borel measure. The following statements are equivalent:
  \begin{enumerate}
    \item[(a)] The measure $\diff\mu$ is a vanishing Carleson measure for $A^p_{\psi}(\Bn)$; 
    \item[(b)] 
    ${\widehat \mu}_p (z)\rightarrow 0 \quad \text{as} \quad z \rightarrow \partial \Bn$;
    \item[(c)] For $z \in \Bn$ and sufficiently small $r>0$,
    \begin{align*}
     \frac{\mu(B_{H}(z,r))}{v(B_{H}(z,r))} \rightarrow 0 \quad \text{as} \quad z \rightarrow \partial \Bn \textit{;}
    \end{align*}
    \item[(d)] There is a $H_\psi$-lattice $\{a_k\}$ such that
    \begin{align*}
      \frac{\mu(B_{H}(a_k,r))}{v\!\left(B_{H}(a_k,r)\right)} \rightarrow 0 \quad \text{as} \quad k \rightarrow \infty.
    \end{align*}
  \end{enumerate}
\end{theorem}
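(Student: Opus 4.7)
The plan is to establish the cycle $(a) \Rightarrow (b) \Rightarrow (c) \Rightarrow (d) \Rightarrow (a)$, paralleling the proof of Theorem \ref{Carl measure char} but replacing uniform bounds by vanishing conditions.

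For $(a)\Rightarrow(b)$, the normalized test functions $\widetilde{\Phi}_{p,z}$ have unit $A^p_{\psi}$-norm and, as noted immediately after \eqref{normalized test function}, converge to $0$ uniformly on compact subsets of $\Bn$ as $\lvert z \rvert \to 1^-$. Thus along any sequence $z_n\to\partial\Bn$ the family $\{\widetilde{\Phi}_{p,z_n}\}$ is bounded in $A^p_{\psi}(\Bn)$ and tends to $0$ on compacts; applying (a) yields $\widehat{\mu}_p(z_n)=\int_{\Bn}\lvert\widetilde{\Phi}_{p,z_n}\rvert^p e^{-\psi}\diff\mu\to 0$, hence (b). For $(b)\Rightarrow(c)$, the pointwise inequality \eqref{char_Carl_b->c} proved inside Theorem \ref{Carl measure char} gives $\mu(B_H(z,r))/v(B_H(z,r)) \lesssim \widehat{\mu}_p(z)$, and boundary vanishing transfers. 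For $(c)\Rightarrow(d)$, the disjointness of $\{B_H(a_k,r/4)\}$ in Lemma \ref{covering lem 2} together with Corollary \ref{cor_vol} and $v(\Bn)<\infty$ forces $v(B_H(a_k,r/4))\to 0$, hence $\lvert a_k\rvert\to 1$ as $k\to\infty$; (c) evaluated along this sequence delivers (d).

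The main step is $(d)\Rightarrow(a)$. Let $\{f_j\}$ be a bounded sequence in $A^p_{\psi}(\Bn)$ converging to $0$ uniformly on compact subsets. Retracing the estimates in the proof of the implication $(d)\Rightarrow(a)$ of Theorem \ref{Carl measure char}, built from the covering Lemma \ref{covering lem 2} and the sub-mean-value Lemma \ref{SMVP}, I would obtain
\begin{align*}
  \int_{\Bn}\lvert f_j(z)\rvert^p e^{-\psi(z)} \diff\mu(z) \lesssim \sum_k \frac{\mu(B_H(a_k,r))}{v(B_H(a_k,r))}\int_{B_H(a_k,2r)} \lvert f_j(w)\rvert^p e^{-\psi(w)} \diff v(w).
\end{align*}
Given $\varepsilon>0$, by (d) choose $K$ so that the ratio is smaller than $\varepsilon$ for every $k>K$; the tail $k>K$ is bounded by $\varepsilon N\sup_j\|f_j\|_{p,\psi}^p$ via the finite-overlap property of Lemma \ref{covering lem 2}(3). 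The head $k\le K$ is a finite sum in which every $B_H(a_k,2r)$ is a relatively compact subset of $\Bn$ (by Theorem \ref{ball_size_comp} and Lemma \ref{comp in D}, one has $1-\lvert w\rvert^2\simeq 1-\lvert a_k\rvert^2$ for $w$ in that ball, so it stays away from $\partial\Bn$). Uniform convergence $f_j\to 0$ on that fixed compact set then forces each summand to $0$ as $j\to\infty$. Letting $j\to\infty$ and then $\varepsilon\to 0$ concludes the argument.

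The main obstacle I anticipate is the compact-containment step used in the head-tail splitting: one must verify that for fixed $a_k\in\Bn$ the closure $\overline{B_H(a_k,2r)}$ lies strictly inside $\Bn$, so that the hypothesis of uniform convergence of $f_j$ on compacts can actually be deployed. This is not automatic from the definition of $\sigma$, but is encoded in the geometric comparison Theorem \ref{ball_size_comp} combined with Lemma \ref{comp in D}, provided $r$ is taken small enough as in the statement. A secondary care point is to observe that the ratios $\mu(B_H(a_k,r))/v(B_H(a_k,r))$ appearing in the head are each finite, which follows from (d) itself since a sequence tending to $0$ is bounded.
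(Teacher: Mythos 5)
Your proposal is correct and follows essentially the same route as the paper: the cycle $(a)\Rightarrow(b)\Rightarrow(c)\Rightarrow(d)\Rightarrow(a)$ using the normalized test functions, the inequality \eqref{char_Carl_b->c}, and the head--tail splitting of the lattice sum via Lemma \ref{SMVP} and Lemma \ref{covering lem 2}. Your added justifications (why $\lvert a_k\rvert\to 1$ for a lattice, and why $\overline{B_H(a_k,2r)}$ is a compact subset of $\Bn$ via Theorem \ref{ball_size_comp} and Lemma \ref{comp in D}) merely make explicit details the paper asserts without comment.
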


\begin{proof}
  Suppose $\mu$ is a vanishing Carleson measure for $A^p_{\psi}(\Bn)$.
  Since $\widetilde{\Phi}_{p,z} \in A^p_{\psi}(\Bn)$ with $\|\widetilde{\Phi}_{p,z}\|_{p,\psi}=1$ and 
$$
\widetilde{\Phi}_{p,z}(w) \simeq \frac{\Phi_{p,z} (w)}{(1-\lvert z \rvert^2)^{\frac{2n+1}{p}}} 
= \frac{e^{\frac{2}{p}\frac{1}{1-\langle w,z\rangle} -\frac{1}{p}\frac{1}{1-\lvert z \rvert^2}}}{(1-\lvert z \rvert^2)^{\frac{2n+1}{p}}}
$$ 
converges to $0$ uniformly on compact subsets of $\Bn$ as $\lvert z \rvert \rightarrow 1^{-}$, we have (b).      

  Relation \eqref{char_Carl_b->c} yields the implication (b) $\Rightarrow$ (c). 

  Suppose the statement (c). For a $H_{\psi}$-lattice, $a_k$ goes to the boundary of $\Bn$ as $k \rightarrow +\infty$ which gives (d).
  
  Let $\{f_j\}$ be a bounded sequence in  $A^p_{\psi}(\Bn)$ which converges to $0$ uniformly on compact subsets, 
and let $$I_j=\int_{\Bn} {\lvert f_j(z)\rvert^p}e^{-\psi(z)}\diff\mu(z).$$ 
  By Lemma \ref{SMVP} and Lemma \ref{covering lem 2} (the same way to Theorem \ref{Carl measure char}), we obtain
  \begin{align*}
    I_j
    \le \sum_{k=1}^{\infty} \frac{\mu(B_H(a_k, r))}{v\!\left(B_{H}(a_k,r)\right)} \int_{B_H(a_k, 2r)} \lvert f_j(w)\rvert^p e^{-\psi(w)} \diff v(w).
  \end{align*} 
  Since $\frac{\mu(B_H (a_k,r))}{v\left(B_{H}(a_k,r)\right)} \rightarrow 0$ ~as~ $k \rightarrow +\infty,$ for any $\varepsilon>0$, there is a positive integer $M$ such that for every $k>M,$ we have $$\frac{\mu(B_H (a_k,r))}{v\!\left(B_{H}(a_k,r)\right)} \leq \varepsilon.$$
  It gives that
    \begin{align*}
    I_j \leq C \sum_{k=1}^M & \int_{B(a_k,2r)} {\lvert f_j(w)\rvert^p}e^{-\psi(w)} \diff v(w)\\
     + & \varepsilon \sum_{k=M+1}^{+\infty} \int_{B(a_k,2r)} {\lvert f_j(w)\rvert^p}e^{-\psi(w)} \diff v(w).
  \end{align*}
  Because the sequence $\{f_j\}$ converges to $0$ uniformly on $\overline{B(a_k,2r)}$, the first summation also converges to $0$ as $j\rightarrow +\infty $. The second summation is dominated by the norm of the function $f_j$ by Lemma \ref{covering lem 2}, namely,
  \begin{align*}
    \sum_{k=M+1}^{+\infty} \int_{B(a_k,2r)} {\lvert f_j(w)\rvert^p}e^{-\psi(w)} \diff v(w) \leq N \|f_j\|_{p,\psi}^p.
  \end{align*}
  Therefore, we have
  \begin{align*}
    \limsup\limits_{j\rightarrow +\infty} I_j 
\leq \varepsilon N \sup_j\|f_j\|_{p,\psi}^p.
  \end{align*}
  Because $\varepsilon$ is arbitrary, the limit of $I_j$ is zero. 
  This completes the proof.
\end{proof}

As we can see from statements (c) and (d) in Theorem \ref{Carl measure char} and Theorem \ref{van_Carl measure char}, the property of (vanishing) Carleson measure does not depend on $p$. 
When the indication of $p$ is not necessary, we will call it a (vanishing) $\psi$-Carleson measure instead of a (vanishing) Carleson measure for $A^p_{\psi}(\Bn)$.

\section{Applications}\label{sec4}
\subsection{Boundedness and compactness of Ces\`{a}ro operators}

Originally, the extended Ces\`{a}ro operator is defined on analytic function spaces on the unit disk:
\begin{align}\label{Vol_1}
  V_g f(z)=\int_0^z f(t) g'(t) \diff t, \quad z\in {\mathbb D}. 
\end{align}
In 1977, Pommerenke \cite{MR454017} defined $V_g$ and studied on the boundedness of the operator on Hardy space $H^2({\mathbb D})$. 
In $n$-dimensional case, Hu \cite{MR1963765} introduced  the extended Ces\`{a}ro operator $V_g$ on the unit ball by means of radial derivative. 
The following is the definition of the operator $V_g$ for $n$-dimensional spaces:
\begin{definition}
  For $g \in \mathcal{O}(\Bn)$,
  \begin{align}\label{Vol_n}
    V_g f(z) := \int_{0}^{1} f(tz)\mathcal{R} g(tz) \frac{\diff t}{t}, \quad z \in \Bn,
  \end{align}
  where $\mathcal{R} g(z) := \sum_{j=1}^{n} z_j \frac{\partial g}{\partial z_j}(z)$.
\end{definition}
\noindent One can see (\ref{Vol_n}) is same as (\ref{Vol_1}) when $n=1$. 

\begin{rmk}
  Let $f$ belong to $\mathcal{O}(\Bn)$. Following \cite{MR3632468}, we can get 
  \begin{align}\label{norm_eq_p}
    \int_{\Bn} \lvert f(z)\rvert^{p} e^{-\psi(z)} \diff v(z) 
\simeq \lvert f(0)\rvert^{p} + \int_{\Bn} \lvert \mathcal{R}f(z)\rvert^p (1-\lvert z \rvert^2)^{2p} e^{-\psi(z)} \diff v(z). 
  \end{align}
  It gives 
  \begin{align*}
    f \in A^p_{\psi}(\Bn) \iff (1-\lvert z \rvert^2)^2\mathcal{R}f(z) \in L^p(\Bn, e^{-{\psi}}\diff v),
  \end{align*}
  which has an essential role of the proof of Theorem \ref{bdd Ces} and Theorem \ref{cpt Ces}.
\end{rmk}

\begin{theorem}\label{bdd Ces}
  Let $g \in \mathcal{O}(\Bn)$. The following statements are equivalent:
  \begin{enumerate}
    \item[(a)] $V_g$ is bounded on $A^p_{\psi}(\Bn)$; 
    \item[(b)] $\lvert  \mathcal{R}g(z)\rvert^p (1-\lvert z \rvert^2)^{2p} \diff v(z)$ is a Carleson measure for $A^p_{\psi}(\Bn)$;
    \item[(c)] $\lvert  \mathcal{R}g(z)\rvert (1-\lvert z \rvert^2)^2$ is bounded. 
  \end{enumerate}
\end{theorem}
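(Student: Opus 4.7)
The plan is to prove the chain of implications (a) $\Leftrightarrow$ (b) $\Leftrightarrow$ (c), using the norm equivalence \eqref{norm_eq_p}, the key identity $\mathcal{R}(V_g f)(z) = f(z)\mathcal{R}g(z)$, and the Carleson measure characterization from Theorem \ref{Carl measure char}.

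First, I would establish the identity $\mathcal{R}(V_g f)(z) = f(z)\mathcal{R}g(z)$. Writing $h(w) := f(w)\mathcal{R}g(w) = \sum_{\alpha}c_{\alpha}w^{\alpha}$ with $c_0 = 0$ (since $\mathcal{R}g(0) = 0$), a direct computation gives $V_g f(z) = \sum_{\alpha \neq 0}\frac{c_{\alpha}}{|\alpha|}z^{\alpha}$, so applying $\mathcal{R}$ and using $\mathcal{R}(z^{\alpha}) = |\alpha|z^{\alpha}$ recovers $h(z)$. Combined with $V_g f(0) = 0$, the norm equivalence \eqref{norm_eq_p} yields
\begin{align*}
\|V_g f\|_{p,\psi}^p \simeq \int_{\Bn} |f(z)|^p |\mathcal{R}g(z)|^p (1-|z|^2)^{2p}\, e^{-\psi(z)}\,\diff v(z) = \int_{\Bn} |f(z)|^p\, e^{-\psi(z)}\,\diff\mu_g(z),
\end{align*}
where $\diff\mu_g(z) := |\mathcal{R}g(z)|^p(1-|z|^2)^{2p}\diff v(z)$. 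By the definition of Carleson measure, this establishes (a) $\Leftrightarrow$ (b).

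For (b) $\Leftrightarrow$ (c), I would apply Theorem \ref{Carl measure char} to replace (b) with the geometric condition $\mu_g(B_H(z,r)) \lesssim v(B_H(z,r))$. Since $\mathcal{R}g$ is holomorphic, Lemma \ref{SMVP} with $s = 0$ gives
\begin{align*}
|\mathcal{R}g(z)|^p \lesssim \frac{1}{(1-|z|^2)^{2n+1}}\int_{B_H(z,r)}|\mathcal{R}g(w)|^p\,\diff v(w).
\end{align*}
By Theorem \ref{ball_size_comp} and Lemma \ref{comp in D}, we have $1-|w|^2 \simeq 1-|z|^2$ on $B_H(z,r)$ for small $r$, and $v(B_H(z,r)) \simeq (1-|z|^2)^{2n+1}$ by Corollary \ref{cor_vol}. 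Multiplying the sub-mean value estimate by $(1-|z|^2)^{2p}$ and inserting $(1-|w|^2)^{2p}$ inside the integral yields
\begin{align*}
|\mathcal{R}g(z)|^p(1-|z|^2)^{2p} \lesssim \frac{\mu_g(B_H(z,r))}{v(B_H(z,r))},
\end{align*}
so (b) implies (c). Conversely, if $|\mathcal{R}g(w)|(1-|w|^2)^2 \le M$, then the same comparability of $1-|w|^2$ on $B_H(z,r)$ gives $|\mathcal{R}g(w)|^p(1-|w|^2)^{2p} \lesssim M^p$ throughout $B_H(z,r)$, hence $\mu_g(B_H(z,r)) \lesssim M^p v(B_H(z,r))$, completing (c) $\Rightarrow$ (b).

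No single step is truly difficult given the machinery already in place: the substantive content was developed in Sections 2 and 3 (sub-mean value property, ball comparability, and the Carleson characterization). The main technical point to handle carefully is verifying the identity $\mathcal{R}(V_g f) = f\cdot\mathcal{R}g$ and noting that $V_g f(0) = 0$ so that \eqref{norm_eq_p} directly converts the $A^p_\psi$-norm of $V_g f$ into a weighted integral of $|f|^p$ against $\diff\mu_g$. Everything else is a direct application of Theorem \ref{Carl measure char} together with the routine pointwise/integral conversion provided by Lemma \ref{SMVP}.
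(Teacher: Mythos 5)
Your proposal is correct and follows essentially the same route as the paper: the identity $\mathcal{R}(V_g f)=f\,\mathcal{R}g$ with $V_gf(0)=0$ and the norm equivalence \eqref{norm_eq_p} give $\|V_gf\|_{p,\psi}^p\simeq\|f\|_{p,\mu_g}^p$ (hence (a)$\Leftrightarrow$(b)), and the sub-mean-value property on $B_H(z,r)$ together with Corollary \ref{cor_vol} and the comparability of $1-\lvert w\rvert^2$ handles (b)$\Leftrightarrow$(c). The only cosmetic differences are that you verify $\mathcal{R}(V_gf)=f\,\mathcal{R}g$ by power series where the paper cites \cite{MR1963765}, and you close the loop via (c)$\Rightarrow$(b) through Theorem \ref{Carl measure char} instead of the paper's direct (c)$\Rightarrow$(a); both are equally valid.
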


\begin{proof}
  By the relation \eqref{norm_eq_p} and the fact $V_g f(0)=0$ and 
$\mathcal{R} V_g f(z)=f(z)\mathcal{R} g(z)$ as in \cite{MR1963765}, we get 
  \begin{align*}
    \|V_g f\|^p_{p, \psi} 
    &\simeq \lvert V_g f(0)\rvert^p + \int_{\Bn} \lvert \mathcal{R}V_g f(z)\rvert^p (1-\lvert z \rvert^2)^{2p} e^{-\psi(z)} \diff v(z)\\
    &=\int_{\Bn} \lvert f(z)\rvert^p \lvert \mathcal{R}g(z) \rvert^p (1-\lvert z \rvert^2)^{2p} e^{-\psi(z)} \diff v(z),
  \end{align*}
  which means 
  \begin{align}\label{carl_1}
     \|f\|_{p,\mu}^p \simeq \|V_g f\|^p_{p, \psi},
  \end{align}
where $\diff\mu(z)=\lvert  \mathcal{R}g(z)\rvert^p (1-\lvert z \rvert^2)^{2p} \diff v(z).$
  It asserts that (a) implies (b). 

  Next, suppose $\diff\mu(z)=\lvert  \mathcal{R}g(z)\rvert^p (1-\lvert z \rvert^2)^{2p} \diff v(z)$ is a Carleson measure for $A^p_{\psi}(\Bn)$. Lemma \ref{SMVP} and \eqref{comp in B_H} yield
  \begin{align*}
    \lvert \mathcal{R}g(z)\rvert^p (1-\lvert z \rvert^2)^{2p} &\le (1-\lvert z \rvert^2)^{2p} \frac{1}{v\!\left(B_H (z, r)\right)} 
\int_{B_H(z, r)} \lvert \mathcal{R}g(w)\rvert^p e^{-\psi(w)} \diff v(w) \\
    &\simeq \frac{1}{v\!\left(B_H (z, r)\right)} \int_{B_H(z, r)} 
\lvert \mathcal{R}g(w)\rvert^p (1-\lvert w \rvert^2)^{2p}e^{-\psi(w)} \diff v(w),
  \end{align*}
  i.e.,
  \begin{align}\label{carl_2}
    \lvert \mathcal{R}g(z)\rvert^p (1-\lvert z \rvert^2)^{2p} \lesssim \frac{\mu(B_H (z, r))}{v(B_H (z, r))}.
  \end{align}
  The last term is dominated by some constant with aid of Theorem \ref{Carl measure char}. It shows (b) implies (c).

Suppose (c) holds, then
\begin{align}
   \begin{split}\label{carl_3}
    \|V_g f\|^p_{p, \psi} 
    &\simeq \int_{\Bn} \lvert f(z)\rvert^p \lvert \mathcal{R}g(z) \rvert^p (1-\lvert z \rvert^2)^{2p} e^{-\psi(z)} \diff v(z)\\
 &\leq\sup_{\Bn}\left\{\lvert \mathcal{R}g(z) \rvert^p (1-\lvert z \rvert^2)^{2p}\right\}
\int _{\Bn} \lvert  f\left( z\right) \rvert ^{p} e^{-\psi(w)}\diff v\!\left( z\right).
\end{split}
  \end{align}
It gives (c) implies (a).

\end{proof}

\begin{theorem}\label{cpt Ces}
  Let $g \in \mathcal{O}(\Bn)$. The following statements are equivalent:
  \begin{enumerate}
    \item[(a)] $V_g$ is compact on $A^p_{\psi}(\Bn)$;
    \item[(b)] $\lvert  \mathcal{R}g(z)\rvert^p (1-\lvert z \rvert^2)^{2p} \diff v(z)$ is a vanishing Carleson measure for $A^p_{\psi}(\Bn)$;
    \item[(c)] $\lvert  \mathcal{R}g(z)\rvert (1-\lvert z \rvert^2)^2 \rightarrow 0$ as $\lvert z \rvert \rightarrow 1^{-}$. 
  \end{enumerate}
\end{theorem}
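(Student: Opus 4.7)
The plan is to mirror the three-step cycle used for Theorem \ref{bdd Ces}, replacing the Carleson characterization (Theorem \ref{Carl measure char}) by its vanishing counterpart (Theorem \ref{van_Carl measure char}). Throughout I set $\diff\mu(z)=\lvert \mathcal{R}g(z)\rvert^p(1-\lvert z \rvert^2)^{2p}\diff v(z)$, so that the identity \eqref{carl_1}, namely $\|V_g f\|_{p,\psi}^p\simeq \|f\|_{p,\mu}^p$, remains available.

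For (a)$\Rightarrow$(b), let $\{f_j\}$ be a bounded sequence in $A^p_{\psi}(\Bn)$ converging to $0$ uniformly on compact subsets. Since $V_g$ is compact, any subsequence of $\{V_g f_j\}$ has a further subsequence converging in $A^p_{\psi}(\Bn)$; the limit must be $0$ because $V_g f_j(z)=\int_0^1 f_j(tz)\mathcal{R}g(tz)\frac{\diff t}{t}\to 0$ pointwise (the integration path is compact in $\Bn$). Hence $\|V_g f_j\|_{p,\psi}\to 0$, and by \eqref{carl_1} $\int_{\Bn}\lvert f_j\rvert^p e^{-\psi}\diff\mu\to 0$, showing that $\diff\mu$ is a vanishing Carleson measure. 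The implication (b)$\Rightarrow$(c) follows at once from \eqref{carl_2}, since vanishing Carleson (Theorem \ref{van_Carl measure char} (c)) gives $\mu(B_H(z,r))/v(B_H(z,r))\to 0$ as $z\to\partial\Bn$.

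For (c)$\Rightarrow$(a), take $\{f_j\}$ bounded in $A^p_{\psi}(\Bn)$ with $f_j\to 0$ uniformly on compact subsets; I must show $\|V_g f_j\|_{p,\psi}\to 0$. Using \eqref{carl_3} and fixing $\varepsilon>0$, hypothesis (c) supplies $r_0\in(0,1)$ such that $\lvert \mathcal{R}g(z)\rvert^p(1-\lvert z \rvert^2)^{2p}<\varepsilon$ whenever $\lvert z \rvert>r_0$. Split the integral as
\begin{align*}
\|V_g f_j\|_{p,\psi}^p &\lesssim \int_{\lvert z \rvert\le r_0} \lvert f_j(z)\rvert^p \lvert \mathcal{R}g(z)\rvert^p (1-\lvert z \rvert^2)^{2p} e^{-\psi(z)}\diff v(z) \\
&\qquad + \varepsilon \int_{\lvert z \rvert> r_0} \lvert f_j(z)\rvert^p e^{-\psi(z)}\diff v(z).
\end{align*}
The first term vanishes as $j\to\infty$ by uniform convergence on the compact set $\{\lvert z \rvert\le r_0\}$ (together with the continuity of $\mathcal{R}g$ there), and the second is at most $\varepsilon\sup_j\|f_j\|_{p,\psi}^p$. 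Letting $j\to\infty$ then $\varepsilon\to 0$ gives $\|V_g f_j\|_{p,\psi}\to 0$, so $V_g$ is compact by the standard criterion that a bounded linear operator on $A^p_{\psi}(\Bn)$ is compact iff it sends such normal-family sequences to norm-null ones.

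The main obstacle I anticipate is the justification in (a)$\Rightarrow$(b) that compactness together with pointwise vanishing of $V_g f_j$ forces $\|V_g f_j\|_{p,\psi}\to 0$; once that subsequence argument is in hand, the remaining implications are straightforward consequences of the pointwise estimates \eqref{carl_2}, \eqref{carl_3} already developed in the proof of Theorem \ref{bdd Ces} combined with Theorem \ref{van_Carl measure char}.
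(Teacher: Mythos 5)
Your proposal is correct and follows essentially the same route as the paper: the paper's proof of Theorem \ref{cpt Ces} simply invokes \eqref{carl_1}, \eqref{carl_2}, and \eqref{carl_3} (together with Theorem \ref{van_Carl measure char}) for the three implications, exactly as you do, with your write-up additionally filling in the standard details of the compactness criterion via normal-family sequences. No gaps worth noting.
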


\begin{proof}
  Similar to the proof of Theorem \ref{bdd Ces}, \eqref{carl_1}, \eqref{carl_2}, and \eqref{carl_3} yield the implications $(a) \Rightarrow (b)$, $(b) \Rightarrow (c)$, and $(c) \Rightarrow (a)$, respectively. 
\end{proof}

\subsection{Boundedness and compactness of Toeplitz operators}

\begin{definition}
  The Toeplitz operator with symbol $u$ is 
  \begin{align*}
    T_{u} f(z) = \int_{\Bn} K(z,w)f(w)u(w)e^{-\psi(w)}\diff v(w)
  \end{align*}
  for $A^2_\psi(\Bn)$.
\end{definition}

Throughout this section, we consider $\diff\mu(z) = u(z) \diff v(z)$ for positive function $u$ and define ${\widehat u}(z) := {\widehat \mu}_2(z)$. Then
\begin{align*}
  {\widehat u} (z) = \int_{\Bn} \lvert\widetilde{\Phi}_{2,z}(w)\rvert^2 e^{-\psi(w)} u(w)\diff v(w)
\end{align*}
and 
\begin{align}\label{imm_berezin}
  {\widehat u} (z) =  \langle T_{u}\widetilde{\Phi}_{2,z}, \widetilde{\Phi}_{2,z} \rangle_{\psi}
\end{align}
by the reproducing property of the Bergman kernel. We can see ${\widehat u}(z)$ plays a similar role to Berezin transform which defined with a normalized Bergman kernel.

\begin{theorem}\label{bdd_Toeplitz}
  Let $u$ be a positive function in $ L^2_\psi(\Bn)$. The following statements are equivalent:
  \begin{enumerate}
    \item[(a)] $T_{u}$ is bounded on $A^2_\psi(\Bn)$;
    \item[(b)] ${\widehat u}$ is a bounded function on $\Bn$;
    \item[(c)] $u \diff v$ is a $\psi$-Carleson measure.
  \end{enumerate}
\end{theorem}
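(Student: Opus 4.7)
The strategy is to note that (b) $\Leftrightarrow$ (c) is just Theorem \ref{Carl measure char} applied with $p=2$ and $d\mu = u\,dv$ (so that $\widehat{u} = \widehat{\mu}_2$ by definition), and then to prove (a) $\Leftrightarrow$ (b) via the identity \eqref{imm_berezin}, which represents $\widehat{u}$ as a quadratic form of $T_u$ on the unit vectors $\widetilde{\Phi}_{2,z}$.

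For (a) $\Rightarrow$ (b), I would use \eqref{imm_berezin} together with Cauchy--Schwarz: since $\|\widetilde{\Phi}_{2,z}\|_{2,\psi}=1$,
\[
\widehat{u}(z) \;=\; \langle T_u \widetilde{\Phi}_{2,z},\, \widetilde{\Phi}_{2,z}\rangle_{\psi}
\;\le\; \|T_u \widetilde{\Phi}_{2,z}\|_{2,\psi}\,\|\widetilde{\Phi}_{2,z}\|_{2,\psi}\;\le\;\|T_u\|,
\]
uniformly in $z\in\Bn$, which is (b).

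For (c) $\Rightarrow$ (a), the plan is to compute $\langle T_u f,g\rangle_{\psi}$ for $f,g\in A^2_\psi(\Bn)$ by Fubini and the reproducing property. Using the conjugate symmetry $K_\psi(z,w)=\overline{K_\psi(w,z)}$ and swapping the order of integration, the inner $z$-integral becomes $\overline{g(w)}$, and one obtains
\[
\langle T_u f,g\rangle_{\psi} \;=\; \int_{\Bn} f(w)\,\overline{g(w)}\,u(w)\,e^{-\psi(w)}\,dv(w).
\]
Applying the Cauchy--Schwarz inequality to this integral against the measure $u\,e^{-\psi}\,dv$, and then invoking (c) on each factor through the defining inequality of a Carleson measure (Theorem \ref{Carl measure char} gives $\int|f|^2 u\,e^{-\psi}dv \lesssim \|f\|_{2,\psi}^2$), yields $|\langle T_u f,g\rangle_{\psi}|\lesssim \|f\|_{2,\psi}\|g\|_{2,\psi}$. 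A standard duality/Riesz representation argument (or the trick $g = T_u f$, after verifying $T_u f \in A^2_\psi$) then promotes this to $\|T_u f\|_{2,\psi}\lesssim \|f\|_{2,\psi}$.

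The main technical point I expect is the justification that $T_u f$ is a well-defined holomorphic function and the Fubini swap is legitimate; this requires confirming absolute integrability of $K_\psi(z,w)f(w)u(w)e^{-\psi(w)}$, which will follow from the hypothesis $u \in L^2_\psi(\Bn)$ together with the pointwise estimate on $K_\psi$ on the diagonal (and the corresponding growth estimate for $f$) provided by Lemma \ref{SMVP} and the analogue of \eqref{Hessian}. Everything else, once the sesquilinear form is shown to be bounded, is routine Hilbert-space functional analysis and needs no further input beyond the Carleson embedding already established.
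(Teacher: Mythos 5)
Your proposal is correct and follows essentially the same route as the paper: (a) $\Rightarrow$ (b) via \eqref{imm_berezin} and Cauchy--Schwarz with the normalized test functions, (b) $\Leftrightarrow$ (c) via the Carleson characterization of Theorem \ref{Carl measure char} (the paper re-derives the lower bound $\widehat{u}(z)\gtrsim \mu(B_H(z,r))/v(B_H(z,r))$ rather than citing it, but the content is identical), and (c) $\Rightarrow$ (a) via Fubini, the reproducing property, Cauchy--Schwarz against $u\,e^{-\psi}\diff v$, and duality. Your extra remark about justifying the Fubini swap is a reasonable point of care but does not change the argument.
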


\begin{proof} 
  Suppose that $T_{u}$ is bounded on $A^2_\psi(\Bn)$. We have
  \begin{align}\label{toe_bdd 1}
    \lvert {\widehat u}(z)\rvert = {\lvert \langle T_{u}\widetilde{\Phi}_{2,z}, \widetilde{\Phi}_{2,z} \rangle_{\psi}\rvert} \le {\left\|T_{u}\widetilde{\Phi}_{2,z}\right\|_{2,\psi}}.
  \end{align}
  Since ${\left\|T_{u}\widetilde{\Phi}_{2,z}\right\|_{2,\psi}} \le \left\|T_{u}\right\|$, (a) implies (b).

  Let $\diff\mu(z) = u(z) \diff v(z)$. We will show that (b) implies $\sup \left\{\frac{\mu(B_H (z,r))}{v(B_H (z, r))} ; z \in \Bn \right\} < +\infty$, which is equivalent that $\diff\mu$ is a $\psi$-Carleson measure.
  For any $z \in \Bn$ and a sufficiently small $r>0$,
  \begin{align*}
    \widehat{u} (z) &= \int_{\Bn} \lvert \widetilde{\Phi}_{2,z}(\zeta)\rvert^2 e^{-\psi(\zeta)}\diff\mu(\zeta) \\
    &\ge \int_{B_H(z,r)} \lvert \widetilde{\Phi}_{2,z}(\zeta)\rvert^2 e^{-\psi(\zeta)}\diff\mu(\zeta) \\
    &\simeq \int_{B_H(z,r)} \frac{1}{(1-\lvert z \rvert^2)^{2n+1}} \lvert e^{\frac{1}{1-\langle \zeta,z\rangle}}\rvert^2 e^{-\frac{1}{1-\lvert z \rvert^2}} e^{-\frac{1}{1-\lvert \zeta\rvert^2}}\diff\mu(\zeta)
  \end{align*}
  by \eqref{normalized test function}. It is obtained 
  \begin{align}\label{toe_bdd 2}
    \widehat{u} (z) 
    \gtrsim \frac{1}{(1-\lvert z \rvert^2)^{2n+1}} \int_{B_H(z,r)} \diff\mu(\zeta)
    \simeq \frac{\mu \!\left(B_H(z,r)\right)}{v\!\left(B_H(z,r)\right)}
  \end{align}
  by \eqref{test fn_1} in Lemma \ref{test fn lem} and Corollary \ref{cor_vol}.

 Fubini's theorem and the reproducing property of the Bergman kernel give that 
  \begin{align*}
    \langle T_{u}f, g \rangle_{\psi}
    &= \int_{\Bn} T_{u} f(w) \overline{g(w)} e^{-\psi(w)} \diff v(w)\\
    &= \int_{\Bn} f(\zeta) \overline{g(\zeta)} e^{-\psi(\zeta)}\diff\mu(\zeta).
  \end{align*}
  By H\"{o}lder's inequality, 
  \begin{align}\label{toe_bdd 3}
    \lvert \langle T_{u}f, g \rangle_{\psi}\rvert \le \left\|f\right\|_{2,\mu} \left\|g\right\|_{2,\mu}.
  \end{align}
  Since $\diff \mu$ is a $\psi$-Carleson measure for $A^2_{\psi}(\Bn)$, we have
   \begin{align*}
    \lvert \langle T_{u}f, g \rangle_{\psi}\rvert \le C \left\|f\right\|_{2,\psi} \left\|g\right\|_{2,\psi}.
  \end{align*}
  Therefore, 
  \begin{align*}
    \|T_{u}f\|_{2,\psi} = \sup_{\|g\|_{2, \psi}=1} \lvert \langle T_{u} f, g\rangle_{\psi}\rvert \le C \left\|f\right\|_{2,\psi}.
  \end{align*}  
  It shows (c) implies (a).
\end{proof}

\begin{theorem}\label{cpt Toeplitz}
  Let $u$ be a positive function in $ L^2_\psi(\Bn)$. The following statements are equivalent:
  \begin{enumerate}
    \item[(a)] $T_{u}$ is compact on $A^2_\psi(\Bn)$;
    \item[(b)] $\lvert{\widehat u}(z)\rvert \rightarrow 0$ as $\lvert z \rvert \rightarrow 1^{-}$;
    \item[(c)] $u \diff v$ is a vanishing $\psi$-Carleson measure. 
  \end{enumerate}
\end{theorem}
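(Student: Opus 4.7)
The plan is to follow the same cyclic strategy as Theorem \ref{bdd_Toeplitz}, $(a)\Rightarrow(b)\Rightarrow(c)\Rightarrow(a)$, with boundedness replaced by compactness and Carleson replaced by vanishing Carleson throughout. The two pointwise bounds \eqref{toe_bdd 1} and \eqref{toe_bdd 2} already established in Theorem \ref{bdd_Toeplitz}, combined with the vanishing Carleson characterization of Theorem \ref{van_Carl measure char}, do essentially all of the work. For $(a)\Rightarrow(b)$, I will test $T_u$ on the normalized family $\{\widetilde{\Phi}_{2,z}\}$: these functions have unit norm in $A^2_\psi(\Bn)$ and converge to $0$ uniformly on compact subsets as $|z|\to 1^-$ (see the remark after \eqref{normalized test function}); since point evaluations are bounded on $A^2_\psi(\Bn)$, this forces weak convergence to $0$ along any approach to $\partial\Bn$. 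Compactness of $T_u$ then gives $\|T_u\widetilde{\Phi}_{2,z}\|_{2,\psi}\to 0$ as $|z|\to 1^-$ (a standard subsequence-of-subsequence argument rules out any boundary sequence along which this norm stays bounded away from zero), and \eqref{toe_bdd 1} furnishes $|\widehat u(z)|\le \|T_u\widetilde{\Phi}_{2,z}\|_{2,\psi}$, completing (b).

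For $(b)\Rightarrow(c)$, I will recycle \eqref{toe_bdd 2}, which already produces $\mu(B_H(z,r))/v(B_H(z,r))\lesssim \widehat u(z)$ for $d\mu=u\,dv$ and small $r$; decay of $\widehat u$ on $\partial\Bn$ transfers directly to the ratio, and Theorem \ref{van_Carl measure char}(c) identifies this with $u\,dv$ being a vanishing $\psi$-Carleson measure. For $(c)\Rightarrow(a)$, I will verify compactness sequentially: given $\{f_j\}$ bounded in $A^2_\psi(\Bn)$ and tending to $0$ uniformly on compact subsets, Fubini and the reproducing property of $K_\psi$ give $\langle T_u f_j,g\rangle_\psi = \int_{\Bn} f_j\,\overline{g}\,e^{-\psi}\,d\mu$ for every $g\in A^2_\psi(\Bn)$. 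Cauchy--Schwarz together with the ordinary Carleson inequality applied to $g$ (every vanishing $\psi$-Carleson measure is a $\psi$-Carleson measure) yield $|\langle T_u f_j,g\rangle_\psi|\lesssim \|f_j\|_{2,\mu}\|g\|_{2,\psi}$; taking supremum over unit $g$ gives $\|T_u f_j\|_{2,\psi}\lesssim \|f_j\|_{2,\mu}$, and the very definition of vanishing $\psi$-Carleson measure in Theorem \ref{van_Carl measure char}(a) sends the right side to $0$.

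The step that deserves the most care is $(a)\Rightarrow(b)$: one must upgrade sequential norm convergence of $T_u$ along weakly null sequences to a genuine boundary decay statement for $\widehat u$, which requires passing through weak convergence of the normalized test functions $\widetilde{\Phi}_{2,z}$ and a standard extraction argument rather than a direct pointwise estimate. Everything else reduces to reusing the estimates already produced in the proof of Theorem \ref{bdd_Toeplitz} together with Theorem \ref{van_Carl measure char}, so no new analytic input beyond what has been developed in the preceding sections is needed.
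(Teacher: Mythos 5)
Your proposal is correct and follows essentially the same route as the paper: the paper also deduces (a)$\Rightarrow$(b) from \eqref{toe_bdd 1} applied to the normalized test functions $\widetilde{\Phi}_{2,z}$ (which tend to $0$ uniformly on compact sets, hence weakly), gets (b)$\Rightarrow$(c) from \eqref{toe_bdd 2} together with Theorem \ref{van_Carl measure char}, and proves (c)$\Rightarrow$(a) via \eqref{toe_bdd 3} and the bound $\|T_u f_j\|_{2,\psi}\lesssim\|f_j\|_{2,\mu}\to 0$ for bounded sequences converging to $0$ on compact subsets. Your write-up merely makes explicit the weak-convergence/compactness step that the paper leaves implicit, so no substantive difference.
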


\begin{proof}
  Inequalities \eqref{toe_bdd 1} and \eqref{toe_bdd 2} assert the implications $(a) \Rightarrow (b)$ and $(b) \Rightarrow (c)$, respectively.

  Suppose $\diff \mu = u \diff v$ is a vanishing $\psi$-Carleson measure. Since $\diff \mu$ is also a $\psi$-Carleson measure, \eqref{toe_bdd 3} implies
  \begin{align*}
    \|T_{u}f\|_{2,\psi} = \sup_{\|g\|_{2, \psi}=1} \lvert \langle T_{u} f, g\rangle_{\psi}\rvert \le C \left\|f\right\|_{2,\mu}.
  \end{align*}  
 Since $\diff \mu$ is a vanishing $\psi$-Carleson measure,
   \begin{align*}
    \|T_{u}f_j\|_{2,\psi} \le C \left\|f_j\right\|_{2,\mu} \rightarrow 0, 
  \end{align*}  
  whenever $\{f_j\}$ is a bounded sequence in $A^p_{\psi}(\Bn)$ which converges to $0$ uniformly on compact subsets.
  It completes the proof.
\end{proof}

When the symbol function $u$ is subharmonic, we get further results on $T_u$.
\begin{corollary}\label{cor_u_subh}
  Let $u$ be a positive function in $ L^2_\psi(\Bn)$. If the symbol $u$ is subharmonic,  
  then the following statements are equivalent:
  \begin{enumerate}
    \item[(a)] $T_{u}$ is bounded on $A^2_\psi(\Bn)$;
    \item[(b)] $u$ is a bounded function on $\Bn$.
  \end{enumerate}
\end{corollary}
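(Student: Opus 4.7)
The easy direction is $(b) \Rightarrow (a)$: if $u$ is bounded, then $u\diff v \le \|u\|_\infty \diff v$, so $\mu(E) \le \|u\|_\infty v(E)$ for every Borel set $E \subset \Bn$. In particular $\mu(B_H(z,r)) \le \|u\|_\infty v(B_H(z,r))$, which means $u\diff v$ is a $\psi$-Carleson measure, and Theorem \ref{bdd_Toeplitz} yields boundedness of $T_u$ on $A^2_\psi(\Bn)$.

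For $(a) \Rightarrow (b)$, my plan is first to feed the hypothesis back through Theorem \ref{bdd_Toeplitz} to obtain the geometric estimate $\int_{B_H(z,r)} u(w) \diff v(w) \le C v(B_H(z,r))$ for all $z \in \Bn$ and some fixed small $r > 0$. The remaining task is to pass from this integral bound on $u$ to the pointwise bound $u(z) \le C$ by means of a sub-mean-value argument tailored to the anisotropy of $B_H(z,r)$.

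To do this, I would exploit the product structure of $D_\psi(z,\cdot)$. After a unitary change of coordinates placing $z$ along the first complex axis, Theorem \ref{ball_size_comp} gives $D_\psi(z,\delta^{-1}r) \subset B_H(z,r)$, and in these coordinates $D_\psi(z,\delta^{-1}r)$ is the product of a disc in $\C$ of radius $\sim (1-\lvert z \rvert^2)^{3/2}$ in the first complex coordinate $w_1$ and a Euclidean ball in $\C^{n-1}$ of radius $\sim (1-\lvert z \rvert^2)$ in the remaining block $w'$. Applying the sub-mean-value property for subharmonic functions first in $w_1$ and then in $w'$ yields
$$u(z) \le \frac{1}{v(D_\psi(z,\delta^{-1}r))} \int_{D_\psi(z,\delta^{-1}r)} u(w) \diff v(w) \le \frac{C v(B_H(z,r))}{v(D_\psi(z,\delta^{-1}r))},$$
and by Corollary \ref{cor_vol} the two volumes are comparable, so the right-hand side is bounded by a constant independent of $z$.

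The main obstacle is the iterated sub-mean-value step, which rests on $u$ being subharmonic separately in the variable $w_1 \in \C$ and in the block $w' \in \C^{n-1}$; this really requires $u$ to be plurisubharmonic rather than merely subharmonic in the ambient $\R^{2n}$ sense. If one only had the weaker hypothesis, then the largest Euclidean ball inscribed in $B_H(z,r)$ has radius only $\sim (1-\lvert z \rvert^2)^{3/2}$, and a naive sub-mean-value on such a ball would give at best $u(z) \lesssim (1-\lvert z \rvert^2)^{1-n}$, which blows up for $n > 1$. Hence the product geometry of $D_\psi$ together with the volume comparability in Corollary \ref{cor_vol} is the essential point that makes the volumes cancel and produces the uniform bound.
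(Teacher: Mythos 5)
Your easy direction is fine, and your reduction of (a) to the geometric bound $\mu(B_H(z,r))\le C\,v(B_H(z,r))$ via Theorem \ref{bdd_Toeplitz} and Theorem \ref{Carl measure char} matches the paper. Your pointwise step --- an iterated sub-mean-value over the product region $D_\psi(z,\delta^{-1}r)$ --- is in fact the same mechanism the paper uses (it cites Lemma \ref{SMVP} with $p=1$, whose proof is exactly an average over $D_\psi$). But, as you yourself observe, averaging first in $w_1$ and then in $w'$ uses subharmonicity of $u$ on each factor separately, i.e.\ plurisubharmonicity, while the corollary assumes only that $u$ is subharmonic on $\R^{2n}$. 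So as written your argument proves (a) $\Rightarrow$ (b) only under a strictly stronger hypothesis than the stated one, and your closing claim --- that with mere subharmonicity one cannot beat $u(z)\lesssim(1-\lvert z\rvert^2)^{1-n}$ --- is the genuine gap: it assumes the Carleson bound may only be exploited on the single region $B_H(z,r)$ centered at $z$.

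The subharmonic case does go through, using positivity of $u$ and the fact that Theorem \ref{Carl measure char} gives the bound at \emph{every} center. Fix $z$, put $d=1-\lvert z\rvert^2$ and $\rho=d/4$, so that $B(z,\rho)\subset\Bn$ and $1-\lvert w\rvert^2\simeq d$ on $B(z,\rho)$. The Euclidean sub-mean-value inequality gives $u(z)\le v(B(z,\rho))^{-1}\int_{B(z,\rho)}u\,\diff v$ with $v(B(z,\rho))\simeq d^{2n}$. Now take an $H_\psi$-lattice $\{a_k\}$ with small $r$ (Lemma \ref{covering lem 2}) and let $K=\{k:\,B_H(a_k,r)\cap B(z,\rho)\neq\emptyset\}$. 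For $k\in K$ one has $1-\lvert a_k\rvert^2\simeq d$ and, by \eqref{B_H radius}, $B_H(a_k,r/4)\subset B(z,C'd)$; since the balls $B_H(a_k,r/4)$ are disjoint and each has volume $\simeq d^{2n+1}$ (Corollary \ref{cor_vol}), a volume count gives $\#K\lesssim d^{-1}$. Hence, using $u\ge 0$ and the Carleson bound at each $a_k$,
\[
\int_{B(z,\rho)}u\,\diff v\;\le\;\sum_{k\in K}\mu\bigl(B_H(a_k,r)\bigr)\;\le\;C\sum_{k\in K}v\bigl(B_H(a_k,r)\bigr)\;\lesssim\;d^{-1}\cdot d^{2n+1}=d^{2n},
\]
so $u(z)\le C$ uniformly in $z$. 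The covering multiplicity $d^{-1}$ exactly offsets the volume discrepancy that worried you, and no product structure or plurisubharmonicity is needed. (Your concern is not misplaced: the paper's own proof applies Lemma \ref{SMVP}, stated for holomorphic functions and proved via plurisubharmonicity, directly to $u$; the repair is the covering argument above, not a strengthening of the hypothesis.)
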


\begin{proof}
  If the symbol function $u$ is bounded, then the Toeplitz operator $T_u$ is bounded.

  Since the symbol $u$ is subharmonic, Lemma \ref{SMVP} with $p=1$ gives
  \begin{align*}
    \lvert u(z)\rvert  
    & \lesssim  \frac{1}{v\!\left(B_H (z, r)\right)} \int_{B_H(z, r)} \lvert u(w)\rvert e^{-\psi(w)} \diff v(w)
  \end{align*} 
  for some small $r>0$.
  Boundedness of the operator $T_u$ implies $\frac{\mu(B_{H}(z,r))}{v\left(B_{H}(z,r)\right)}  \le C $ where $\diff \mu = u \diff v$ by Theorem \ref{bdd_Toeplitz}. 
  Hence, $u$ is bounded on $\Bn$.
\end{proof}

\begin{corollary}
  Let $u$ be a positive function in $ L^2_\psi(\Bn)$. If the symbol $u$ is subharmonic,  
  then the following statements are equivalent:
  \begin{enumerate}
    \item[(a)] $T_{u}$ is compact on $A^2_\psi(\Bn)$;
    \item[(b)] $u(z) \rightarrow 0$ when $z \rightarrow \partial \Bn$.
  \end{enumerate}
\end{corollary}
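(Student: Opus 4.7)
\medskip

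The plan is to mirror the proof of Corollary \ref{cor_u_subh}, replacing "bounded" by "vanishing at $\partial\Bn$" and invoking the compact/vanishing-Carleson version of the Toeplitz characterization, namely Theorem \ref{cpt Toeplitz} together with Theorem \ref{van_Carl measure char}. Write $\diff\mu=u\diff v$; the whole corollary reduces to the equivalence between $u(z)\to 0$ as $z\to\partial\Bn$ and the vanishing Carleson condition
\[
\frac{\mu(B_H(z,r))}{v(B_H(z,r))}\longrightarrow 0 \quad\text{as } z\to\partial\Bn,
\]
since compactness of $T_u$ is equivalent to the latter by Theorem \ref{cpt Toeplitz}.

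For the direction (b)$\Rightarrow$(a), I would proceed directly: by Theorem \ref{ball_size_comp} the Hessian ball satisfies $B_H(z,r)\subset D_\psi(z,\delta r)$, and by Lemma \ref{comp in D} every $w\in D_\psi(z,\delta r)$ satisfies $1-\lvert w\rvert^2\simeq 1-\lvert z\rvert^2$. Consequently, as $z\to\partial\Bn$ all points of $B_H(z,r)$ approach $\partial\Bn$ as well, so under the hypothesis $u(z)\to 0$ we obtain $\sup_{w\in B_H(z,r)}u(w)\to 0$. Trivially
\[
\frac{\mu(B_H(z,r))}{v(B_H(z,r))}\le \sup_{w\in B_H(z,r)} u(w)\longrightarrow 0,
\]
which gives that $\diff\mu$ is a vanishing $\psi$-Carleson measure, and hence $T_u$ is compact by Theorem \ref{cpt Toeplitz}.

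For the direction (a)$\Rightarrow$(b), I would first invoke Theorem \ref{cpt Toeplitz} and Theorem \ref{van_Carl measure char} to get that $\frac{\mu(B_H(z,r))}{v(B_H(z,r))}\to 0$. Then I would re-use the sub-mean-value inequality for subharmonic $u$ established in the proof of Corollary \ref{cor_u_subh},
\[
u(z)\lesssim \frac{1}{v(B_H(z,r))}\int_{B_H(z,r)} u(w)\,e^{-\psi(w)}\diff v(w),
\]
and observe that since $\psi(w)=\frac{1}{1-\lvert w\rvert^2}\ge 1$ on $\Bn$ we have $e^{-\psi(w)}\le e^{-1}<1$. Hence the right-hand side is dominated by $\frac{\mu(B_H(z,r))}{v(B_H(z,r))}$, which tends to $0$ as $z\to\partial\Bn$, yielding $u(z)\to 0$ and completing the proof.

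The main analytic obstacle is the subharmonic sub-mean-value inequality on the Hessian ball invoked above; but it is exactly the one already established in the proof of Corollary \ref{cor_u_subh} via Lemma \ref{SMVP} applied to the holomorphic test functions $\Phi_{p,z}$, so here it may simply be cited. Once that inequality is on the table, the present corollary is a routine "compact analogue" of Corollary \ref{cor_u_subh}: everything else reduces to combining the characterizations of compactness (Theorem \ref{cpt Toeplitz}) and of vanishing $\psi$-Carleson measures (Theorem \ref{van_Carl measure char}) with the local comparability $1-\lvert w\rvert^2\simeq 1-\lvert z\rvert^2$ on $B_H(z,r)$.
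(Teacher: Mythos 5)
Your proposal is correct and follows essentially the same route as the paper, which proves this corollary by citing the subharmonic sub-mean-value estimate from Corollary \ref{cor_u_subh} (via Lemma \ref{SMVP}) together with Theorem \ref{cpt Toeplitz}; you merely fill in the details (the comparison $\mu(B_H(z,r))/v(B_H(z,r))\le\sup_{B_H(z,r)}u$ and dropping the weight $e^{-\psi}\le 1$), which is exactly what the paper's sketch intends.
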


\begin{proof}
  Similar to the proof of Corollary \ref{cor_u_subh}, Lemma \ref{SMVP} and Theorem \ref{cpt Toeplitz} give the result.
\end{proof}





\end{document}